\documentclass[11pt,letterpaper]{article}
\usepackage[margin=1in]{geometry}
\usepackage[T1]{fontenc}
\usepackage{lmodern}

\RequirePackage{amsthm,amsmath,amsfonts,amssymb,mathtools}
\RequirePackage[numbers,sort&compress]{natbib}
\RequirePackage{booktabs,tabularx,longtable,microtype,xcolor,graphicx,placeins,needspace}
\RequirePackage[colorlinks,linkcolor=blue,citecolor=blue,urlcolor=blue]{hyperref}
\RequirePackage[nameinlink,capitalise,noabbrev]{cleveref}
\providecommand{\doi}[1]{doi: \href{https://doi.org/#1}{\begingroup\urlstyle{rm}\nolinkurl{#1}\endgroup}}
\hypersetup{
  pdftitle={Shifted Anticoncentration for Real Gram Hafnians and Symmetric Gaussian Hafnians},
  pdfauthor={Hongru Zhao},
  pdfsubject={Probability theory and random matrices},
  pdfkeywords={hafnian, Gaussian Gram matrix, small ball probability, anticoncentration}
}

\allowdisplaybreaks
\numberwithin{equation}{section}

\newtheorem{theorem}{Theorem}[section]
\newtheorem{proposition}[theorem]{Proposition}
\newtheorem{lemma}[theorem]{Lemma}
\newtheorem{corollary}[theorem]{Corollary}
\theoremstyle{definition}

\theoremstyle{definition}

\newtheorem*{unnumberedremark}{Remark}

\newcommand{\R}{\mathbb R}
\newcommand{\E}{\mathbb E}
\newcommand{\Pp}{\mathbb P}
\newcommand{\M}{\mathcal M}
\newcommand{\haf}{\operatorname{haf}}
\newcommand{\pf}{\operatorname{pf}}
\newcommand{\Gram}{\operatorname{Gram}}

\newcommand{\dd}{\,\mathrm d}
\newcommand{\T}{\mathsf T}

\newcommand{\norm}[1]{\lVert #1\rVert}
\newcommand{\abs}[1]{\lvert #1\rvert}

\begin{document}
\title{Shifted Anticoncentration for Real Gram Hafnians\\
and Symmetric Gaussian Hafnians}
\author{Hongru Zhao\\
\small School of Statistics, University of Minnesota\\
\small Minneapolis, Minnesota, USA\\
\small\texttt{zhao1118@umn.edu}}
\date{}
\maketitle

\begin{abstract}
We prove a uniform shifted anticoncentration theorem for the hafnian of a real
Gaussian Gram matrix under an explicit condition on the row dimension. After
normalization by its root mean square, the law has a bounded continuous density,
maximal at zero, and every interval has probability bounded by an explicit
coefficient times its radius. Under suitable growth conditions on the row dimension,
this coefficient grows at most polynomially in the hafnian order, meaning half the
dimension of the Gram matrix. We also compute the exact second moment. The
proof exploits the perfect matching structure, combining conditional Gaussian
representations, row suspension, and bilinear interpolation to control an
inverse moment of the conditional variance. At fixed hafnian order, a rescaled
limit as the row dimension grows yields corresponding bounds for the hafnian of
a real symmetric Gaussian matrix with independent entries above the diagonal.
\end{abstract}

\medskip
\noindent\textbf{Keywords:} anticoncentration; hafnian; Gaussian Gram matrix;
small ball probability; formalized mathematics.

\smallskip
\noindent\textbf{MSC2020:} Primary 60B20; secondary 60E15, 60E10, 05C70.

\section{Introduction}

Anticoncentration bounds the probability that a random variable lies in a
short interval.  An important motivation comes from arguments for quantum
advantage in sampling.  In boson sampling, collision free output
probabilities are squared moduli of matrix permanents
\cite{AaronsonArkhipov2013}; in Gaussian boson sampling with squeezed
vacuum inputs, they are proportional to squared moduli of hafnians
\cite{HamiltonEtAl2017,KruseEtAl2019}.

For approximate sampling, a small total-variation error controls output
probabilities additively, whereas the relevant average-case hardness
conjectures concern relative approximation.  Anticoncentration controls
how often a probability is too small for an additive estimate to give
useful relative accuracy.  Combined with Stockmeyer's approximate counting
method, this conversion relates a hypothetical efficient classical sampler
to relative estimates of output probabilities on a controlled fraction of
instances \cite{Stockmeyer1985,AaronsonArkhipov2013,HangleiterEisert2023}.
Anticoncentration is one ingredient in these conditional hardness
arguments, together with average-case hardness and a justified passage
between optical and Gaussian matrix ensembles.

Gaussian determinants and Pfaffians illustrate why analogous probabilistic
estimates can be more tractable.  The orthogonal reductions in
Appendix~\ref{app:benchmarks}, Sections~\ref{ex:real-ginibre-determinant}
and~\ref{app:df-axiom}, give the independent chi product laws
\eqref{eq:determinant-product-law} and \eqref{eq:df-pfaffian-law-axiom}.
Their squares are products of independent gamma variables.  The signed
laws give Gaussian scale mixtures, reducing density and short interval
bounds to explicit inverse moments.  These reductions preserve determinant
magnitude under orthogonal row transformations and Pfaffian magnitude under
orthogonal congruence.  For permanents and hafnians, the corresponding
transformations of their matrix arguments do not preserve magnitude in
general.  The same reductions therefore do not furnish independent product
laws in general, and their permutation or matching summands share entries.

General anticoncentration bounds for Gaussian polynomials deteriorate with
the degree \cite{CarberyWright2001,MekaNguyenVu2016}, so bounds linear in
the interval radius require more specific structure.  Koehler and Leung's
independent-entry permanent method combines a bilinear Gaussian kernel,
averaged interpolation, and Fourier comparison
\cite[Lemmas~3.1--3.4 and Proposition~3.5]{KoehlerLeung2026}.
For Gaussian boson sampling, recent second-moment analyses describe
squeezing-dependent behavior \cite{EhrenbergEtAlPRL2025,EhrenbergEtAlPRA2025},
and auxiliary-field formulas give related moment representations
\cite{ShouEhrenbergEtAl2026}.  In the real Gram problem, the matching terms
overlap and the Gram entries reuse the same Gaussian columns.  Independence
of the underlying Gaussian entries therefore does not separate the terms,
and the permanent argument does not itself give the needed Gram cofactor
comparison.

We establish a bound linear in the interval radius, uniformly over its
center, for hafnians of real Gaussian Gram matrices.

Last column expansion makes the proof's main obstacle precise.  Conditional
on the preceding columns, the hafnian is centered Gaussian with variance
equal to the squared norm of the column matrix multiplied by its own
cofactor vector.  A bounded density therefore requires an inverse half
moment of this dependent random scale.  A worst-direction singular-value
estimate discards the matching structure.  Instead, we expose two columns,
classify their matching partners, and apply bilinear Gaussian
interpolation.  A finite coordinate compression ends at a singleton,
where a forced matching edge removes one ambient row and lowers the
hafnian order by one.  This matching decomposition and deletion endpoint
supply the Gram cofactor comparison.

As an optical application, we treat a prescribed, or independently
selected, output pattern in an equally squeezed real orthogonal sector.
The Haar block approximation has an explicit additive total-variation
error.  This gives a probability bound for that sector; separate
complexity assumptions are required to turn such bounds into an argument
for quantum advantage.

The main results and a compact classical comparison appear next.  We then
prove the cofactor, compression, Fourier, and density steps in order,
followed by the normalization, symmetric limit, and coefficient estimates.
The appendices contain the Gaussian kernel calculation, classical
benchmarks, and the full optical derivation.

Theorems~\ref{thm:main} and~\ref{thm:symmetric-limit} are formalized in
Lean~4 without additional mathematical axioms; their proofs are available
on GitHub \cite{ZhaoRealGramHafniansGitHub2026}.  The broader Zenodo
development \cite{ZhaoLeanVerification2026} supplies the supporting proofs
and a correspondence for every numbered theorem and equation, distinguishing
proved statements, definitions, hypotheses, and cited inputs.  It also
proves the exact Barnes identity and asymptotic in
\eqref{eq:bnR-barnes-asymptotic}.  The Pfaffian benchmark is conditional on
its cited product law; the permanent bound and Haar block approximation
remain outside the formal coverage.

\section{Main results}
\label{sec:results}

Throughout, vectors in $\R^d$ or $\mathbb C^d$ are columns, and
${}^{\T}$ denotes transpose without complex conjugation.  Coordinate
lists such as $(z_i)_{i=1}^d$ denote column vectors;
$[v_1\ \cdots\ v_m]$ denotes the matrix with columns $v_1,\ldots,v_m$.
For real vectors $u,v\in\R^d$, we write
$\langle u,v\rangle=u\cdot v=u^{\T}v=\sum_{a=1}^d u_av_a$
and use the Euclidean norm.

For $n,k\ge1$, let $X\in\R^{k\times2n}$ have independent standard normal
entries, and let $x_1,\ldots,x_{2n}\in\R^k$ be its columns.
Let $\M_{2n}$ denote the perfect matchings of $\{1,\ldots,2n\}$.
Since diagonal entries do not enter the hafnian, throughout the paper
\begin{equation}
 H_{k,n}=
 \sum_{\mathfrak m\in\M_{2n}}
 \prod_{\{i,j\}\in\mathfrak m}\langle x_i,x_j\rangle.
 \label{eq:H-definition}
\end{equation}
For $d>1$, let $\chi_d^2$ denote a random variable having the chi-square
distribution with $d$ degrees of freedom, and define
\begin{equation}
 \gamma_d
 :=\E[(\chi_d^2)^{-1/2}]
 =2^{-1/2}\frac{\Gamma((d-1)/2)}{\Gamma(d/2)}.
 \label{eq:gamma-definition}
\end{equation}
Write $\sigma_{k,n}:=(\E H_{k,n}^2)^{1/2}$ for the root mean square.
For $n\ge1$ and $k\ge n+1$, define
\begin{equation}
 B^{\R}_{k,n}
 :=\sqrt{\frac2\pi}\,\sigma_{k,n}
 \left(\prod_{j=0}^{n-1}\gamma_{k-j}\right)
 \left(\prod_{r=2}^{n}\gamma_{2r-1}\right).
 \label{eq:exact-B}
\end{equation}
Empty products are interpreted as one.  The gamma identity follows by
integrating the chi-square density.  Section~\ref{sec:moment} proves the
real normalization \eqref{eq:exact-rms} directly, using the same
auxiliary-field method as the complex proof in
\cite[Eq.~(3), Supplemental Eqs.~(S8), (S9), (S11)]{ZhaoComplexMoments2026},
with conjugation omitted for real columns.  The complex normalization is
also given in \cite[Theorem~1]{EhrenbergEtAlPRA2025}.

\begin{theorem}[Density and explicit shifted anticoncentration]
\label{thm:main}
Let $n\ge1$ and $k\ge n+1$, and let $H_{k,n}$,
$\sigma_{k,n}$, and $B^{\R}_{k,n}$ be as defined above.
Then $\sigma_{k,n}>0$ and
\begin{equation}
 \sigma_{k,n}^{2}=\E H_{k,n}^{2}
 =(2n-1)!!\prod_{q=0}^{n-1}(k+2q).
 \label{eq:exact-rms}
\end{equation}
The law of $H_{k,n}$ has a bounded continuous density $f_{k,n}$ satisfying
\begin{equation}
 \sup_{z\in\R} f_{k,n}(z)=f_{k,n}(0)
 \le \frac{B^{\R}_{k,n}}{2\sigma_{k,n}}.
 \label{eq:density-bound}
\end{equation}
Consequently, for every $z\in\R$ and $\varepsilon\ge0$,
\begin{equation}
 \Pp\{\abs{H_{k,n}-z}\le\varepsilon\sigma_{k,n}\}
 \le \min\{1,B^{\R}_{k,n}\varepsilon\}.
 \label{eq:exact-small-ball}
\end{equation}
If, in addition, $k\ge n+2$, then
\begin{equation}
 B^{\R}_{k,n}
 \le
 \frac2{\sqrt\pi}n^{3/8}
 \exp\!\left\{\frac{3n^{2}+n}{4(k-n-1)}\right\}.
 \label{eq:elementary-B}
\end{equation}
\end{theorem}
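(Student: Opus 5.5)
The plan follows the structure outlined in the introduction: second moment first, then a Gaussian scale mixture representation, then an inverse half moment bound for the conditional variance.

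\emph{Second moment.} For \eqref{eq:exact-rms}, expand $H_{k,n}^2$ as a double sum over pairs of matchings $(\mathfrak m,\mathfrak m')$. The union $\mathfrak m\cup\mathfrak m'$ is a disjoint union of even cycles. Because the columns are independent, the expectation of the product of inner products factors over these cycles. A cycle of length $2\ell$ contributes $\E[\operatorname{tr}(W^\ell)]$-type terms, where $W$ involves independent Gaussian vectors. Rather than sum cycle structures directly, I will use the auxiliary field method cited from \cite{ZhaoComplexMoments2026}: write $\haf$ through a Gaussian integral, integrate out $X$, and reduce to a $k$-dimensional integral whose Taylor coefficient gives $(2n-1)!!\prod_{q<n}(k+2q)$. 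As a check, the formula holds at $n=1$, since $\E\langle x_1,x_2\rangle^2=k$. Positivity of $\sigma_{k,n}$ is then immediate.

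\emph{Density.} Expand along the last column: $H_{k,n}=\langle x_{2n},v\rangle$, where $v=\sum_{i<2n}x_i\,\haf(\text{cofactor}_{i,2n})$ depends only on $x_1,\dots,x_{2n-1}$. Conditionally on these columns, $H_{k,n}\sim N(0,\norm{v}^2)$. Hence $H_{k,n}$ is a Gaussian scale mixture, with density
\[
f_{k,n}(z)=\E\bigl[(2\pi)^{-1/2}\norm{v}^{-1}e^{-z^2/(2\norm v^2)}\bigr].
\]
This formula makes the density even and maximal at $0$, and gives continuity by dominated convergence, provided that $\E\norm v^{-1}<\infty$. So everything reduces to the bound
\[
\E\norm{v}^{-1}\le \sigma_{k,n}^{-1}\prod_{j=0}^{n-1}\gamma_{k-j}\prod_{r=2}^n\gamma_{2r-1}.
\]
Given this, \eqref{eq:density-bound} follows, and integrating the density over an interval of length $2\varepsilon\sigma_{k,n}$ gives \eqref{eq:exact-small-ball}.

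\emph{Main obstacle: the inverse moment of $\norm v$.} I expect this step to be the hard part, and I would prove the bound by induction on $n$. First expose column $x_{2n-1}$. Writing $v=x_{2n-1}h+w$, where $h$ is the hafnian of order $n-1$ on the first $2n-2$ columns, $v$ is affine in $x_{2n-1}$. The $w$ part also depends linearly on $x_{2n-1}$ through the matchings in which $2n-1$ is paired with some $i<2n-1$. I would then use bilinear Gaussian interpolation (Appendix kernel): replace the pair $(x_{2n-1},x_{2n})$ by an interpolated Gaussian pair and show that $\E\norm v^{-1}$ is bounded by the corresponding quantity in which $v$ is projected onto the span of the first $2n-2$ columns plus a fresh direction. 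A chi-type factor then appears. Conditioning on the span of the other columns, the component of $x_{2n-1}$ orthogonal to a $(j)$-dimensional subspace is $\chi_{k-j}$-distributed, which produces $\gamma_{k-j}$.

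The factor $\gamma_{2r-1}$ should come from classifying the matching partners of the two exposed columns among the remaining $2r-2$ columns, together with coordinate compression down to a singleton. There, a forced edge deletes one row and lowers $k\to k-1$, $n\to n-1$. Closing the induction requires the ratio $\sigma_{k,n}/\sigma_{k-1,n-1}$ to match the product of the chi factors, which I would check directly from \eqref{eq:exact-rms}.

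\emph{Elementary bound.} For \eqref{eq:elementary-B}, insert \eqref{eq:exact-rms} and use the Gautschi-type inequality $\gamma_d\le (d-2)^{-1/2}$ together with $\gamma_{2r-1}\sqrt{2r}\approx 1$ refinements. Taking logarithms, the ratio $\prod(k+2q)/\prod(k-j-2)$ is controlled by $\sum_q \log\bigl(1+\tfrac{2q+j+\dots}{k-n-1}\bigr)\le (3n^2+n)/(2(k-n-1))$. The remaining $(2n-1)!!\prod_r\gamma_{2r-1}^2$ is a Wallis-type product, bounded by $\tfrac{4}{\pi}\cdot\tfrac{1}{2}n^{3/4}$. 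Taking the square root gives the stated form.
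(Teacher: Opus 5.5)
Your second-moment and scale-mixture steps agree with the paper, but the reduction target is mis-normalized. Since $f_{k,n}(0)=(2\pi)^{-1/2}\E\norm{v}^{-1}$ and $B^{\R}_{k,n}/(2\sigma_{k,n})=(2\pi)^{-1/2}\prod_{j=0}^{n-1}\gamma_{k-j}\prod_{r=2}^{n}\gamma_{2r-1}$, you need $\E\norm{v}^{-1}\le\prod_{j}\gamma_{k-j}\prod_{r}\gamma_{2r-1}$, with no factor $\sigma_{k,n}^{-1}$. Your version is already false at $n=1$: there $v=x_1$, $\E\norm{x_1}^{-1}=\gamma_k$, and $\sigma_{k,1}=\sqrt k$. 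Correspondingly, there is no relation between $\sigma_{k,n}/\sigma_{k-1,n-1}$ and the chi factors to check. The paper's recursion $\E V_{k,r}^{-1/2}\le\gamma_k\gamma_{2r-1}\E V_{k-1,r-1}^{-1/2}$ never uses the second moment, which enters only through the normalization built into $B^{\R}_{k,n}$.

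The genuine gap is the inverse moment itself. Your mechanism extracts $\gamma_{k-j}$ from the component of a column orthogonal to a span. That is the Gram--Schmidt mechanism for determinants, and made concrete it breaks down. For $n\ge2$, decompose $x_{2n-1}=x_\parallel+x_\perp$ relative to $\operatorname{span}(x_1,\ldots,x_{2n-2})$. This gives $v=h\,x_\perp+(\text{a vector in that span})$, hence $\norm v\ge\abs h\,\norm{x_\perp}$. But $h$ has the law of $H_{k,n-1}$, which is conditionally centered Gaussian, so $\E\abs h^{-1}=\infty$; the other factor $\gamma_{k-2n+2}$ would also require $k\ge 2n$. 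Nor can the bilinear kernel be applied to $\E\norm v^{-1}$ directly: it controls characteristic functions, and you give no route back to an inverse moment.

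The missing ingredients are these. \emph{Row suspension}: append the deterministic column $e_1$, so that $\langle e_1,q_{k,r}\rangle=P_r(y,B)$ is itself a hafnian in the first-row coordinates $y$ and lower blocks $B$. The forced edge at a singleton joins $e_1$ to the surviving column, so it does not exist without this step. \emph{Exposed-pair decomposition}: write $P_r$ as $X^{\T}TY$, plus terms linear in the lower blocks $X=b_p$, $Y=b_q$, plus the pure phase $axy'$. The kernel and H\"older apply to this form, and iteration gives $\abs{\Psi_{r,t}(y)}\le\Psi_{r,t}(\norm y e_j)$ for $\Psi_{r,t}(y)=\E_Be^{itP_r(y,B)}$. \emph{Singleton deletion}: $\Psi_{r,t}(se_j)=\E e^{-t^2s^2V_{k-1,r-1}/2}$ involves the lower cofactor energy $V_{k-1,r-1}$ rather than $\abs{H_{k-1,r-1}}$, which is precisely what avoids the divergence above. \emph{Radiality}: radiality of $q_{k,r}$ plus Gaussian averaging of the Fourier variable yields $\E e^{-sV_{k,r}/2}\le\E e^{-sG_kG_{2r-1}V_{k-1,r-1}/2}$, followed by the $s^{-1/2}$ Mellin integral.

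So the row-dimension factor at each level comes from the radial average $(1+sU)^{-k/2}=\E e^{-sUG_k/2}$, and $\gamma_{2r-1}$ comes from $\norm y^2\sim\chi^2_{2r-1}$. Separately, your Wallis-type bound $\frac2\pi n^{3/4}$ for $(2n-1)!!\prod_r\gamma_{2r-1}^2$ already fails at $n=1$. What is needed is $2n^{3/4}$, which the paper gets from monotonicity of $W_j$ and log-convexity of $\Gamma$.
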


\begin{proof}
Sections~\ref{sec:cofactor}--\ref{sec:fourier} prove cofactor positivity and
the inverse moment estimate.  Section~\ref{sec:density-proof} then gives the
density and shifted bound, Section~\ref{sec:moment} computes the exact second
moment, and Section~\ref{sec:constant} proves the elementary coefficient
estimate.
\end{proof}

\begin{unnumberedremark}[Optimality of the radius dependence]
The Gaussian mixture formula gives $f_{k,n}(0)>0$.  Continuity at zero
therefore implies
\[
 \lim_{\varepsilon\downarrow0}
 \frac{\Pp\{|H_{k,n}|\le\varepsilon\sigma_{k,n}\}}{\varepsilon}
 =2\sigma_{k,n}f_{k,n}(0)>0.
\]
Thus a uniform bound with $o(\varepsilon)$ radius dependence is impossible.
\end{unnumberedremark}

\Needspace{13\baselineskip}
\begin{corollary}[Polynomial small balls]
\label[corollary]{cor:polynomial}
Fix $A,D>0$ and let $(k_n)$ be a sequence of positive integers such that,
for all sufficiently large $n$,
\begin{equation}
 \frac{n^2}{k_n}\le D\log n.
 \label{eq:asymptotic-regime}
\end{equation}
For every fixed
\begin{equation}
 \alpha>A+\frac{3D}{4}+\frac38,
 \label{eq:alpha-choice}
\end{equation}
one has, for all sufficiently large $n$ and every $z\in\R$,
\begin{equation}
 \Pp\{\abs{H_{k_n,n}-z}\le n^{-\alpha}\sigma_{k_n,n}\}
 \le n^{-A}.
 \label{eq:polynomial-small-ball}
\end{equation}
\end{corollary}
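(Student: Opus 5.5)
The plan is to deduce the corollary directly from Theorem~\ref{thm:main}, using \eqref{eq:exact-small-ball} with $\varepsilon=n^{-\alpha}$ together with the elementary coefficient bound \eqref{eq:elementary-B}. There are three steps: verify the hypothesis $k_n\ge n+2$, bound $B^{\R}_{k_n,n}$, and compare the result with $n^{-A}$.

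\emph{Step 1: the hypothesis $k_n\ge n+2$.} From \eqref{eq:asymptotic-regime},
\[
 k_n\ge \frac{n^2}{D\log n}.
\]
The right-hand side exceeds $n+2$ for all sufficiently large $n$. Hence Theorem~\ref{thm:main} applies with $k=k_n$, including the estimate \eqref{eq:elementary-B}.

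\emph{Step 2: the exponent in \eqref{eq:elementary-B}.} Write
\[
 \frac{3n^2+n}{4(k_n-n-1)}
 =\frac{3n^2+n}{4k_n}\cdot\frac{k_n}{k_n-n-1}.
\]
For the first factor,
\[
 \frac{3n^2+n}{4k_n}\le \frac34\Bigl(1+\frac1{3n}\Bigr)D\log n.
\]
For the second factor, Step 1 gives $(n+1)/k_n\le D(n+1)\log n/n^2\to0$, so
\[
 \frac{k_n}{k_n-n-1}=\frac{1}{1-(n+1)/k_n}=1+O\Bigl(\frac{\log n}{n}\Bigr).
\]
Combining the two, the exponent is at most
\[
 \frac{3D}{4}\log n+O\Bigl(\frac{\log^2 n}{n}\Bigr).
\]
The error term tends to zero, so it is bounded by $1$ for large $n$. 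This yields
\[
 B^{\R}_{k_n,n}\le \frac{2e}{\sqrt\pi}\,n^{3/8+3D/4}.
\]

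\emph{Step 3: conclusion.} For every $z$, \eqref{eq:exact-small-ball} with $\varepsilon=n^{-\alpha}$ gives
\[
 \Pp\{\abs{H_{k_n,n}-z}\le n^{-\alpha}\sigma_{k_n,n}\}
 \le \frac{2e}{\sqrt\pi}\,n^{3/8+3D/4-\alpha}.
\]
By \eqref{eq:alpha-choice}, the quantity $\delta:=\alpha-A-3D/4-3/8$ is positive. The right-hand side is therefore $\frac{2e}{\sqrt\pi}\,n^{-A}n^{-\delta}$, which is at most $n^{-A}$ once $n^{\delta}\ge 2e/\sqrt\pi$. This holds for all sufficiently large $n$, uniformly in $z$.

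The only point requiring care is Step 2: the denominator in \eqref{eq:elementary-B} is $k_n-n-1$ rather than $k_n$, and this correction must be absorbed without losing a power of $n$. It is harmless because $k_n\gg n$ in this regime.
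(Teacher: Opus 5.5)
Your proposal is correct and follows the paper's proof: you check that $k_n\ge n+2$ holds eventually, insert the scale assumption into \eqref{eq:elementary-B}, and use the strict inequality in \eqref{eq:alpha-choice} to absorb the constant. Your explicit $O(\log^2 n/n)$ control of the $k_n-n-1$ correction is a slightly sharper form of the paper's bound $\log B^{\R}_{k_n,n}\le(3/8+3D/4+o(1))\log n$, and the conclusion is the same.
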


\begin{proof}
The proof is given in Section~\ref{sec:constant}.  There the scale assumption is
inserted into \eqref{eq:elementary-B}; taking logarithms gives the stated
polynomial coefficient, and \eqref{eq:alpha-choice} yields the uniform small
ball conclusion \eqref{eq:polynomial-small-ball}.
\end{proof}

\subsection{The symmetric Gaussian model and classical comparisons}
Let $S_n$ be symmetric of size $2n$, with zero diagonal and independent
standard normal entries above the diagonal.  Define
\begin{equation}
 H_n:=\haf(S_n),\qquad
 \widehat H_n:=H_n/\sqrt{(2n-1)!!}.
 \label{eq:symmetric-hafnian-normalization}
\end{equation}
The diagonal convention is immaterial.  Put
\[
 b_n^{\R}:=\sqrt{\frac2\pi}\sqrt{(2n-1)!!}
                \prod_{r=2}^n\gamma_{2r-1}.
\]
\begin{theorem}[Real symmetric Gaussian anticoncentration]
\label{thm:symmetric-limit}
Fix $n\ge1$, let $S_n$ be the real symmetric Gaussian matrix defined above,
and define $H_n$ and $\widehat H_n$ by
\eqref{eq:symmetric-hafnian-normalization}.  Let
$\sigma_n^{\mathrm{sym}}$ be the root
mean square of $H_n$.  Then, as $k\to\infty$
with $n$ fixed,
\[
 k^{-n/2}H_{k,n}\ \Rightarrow\ H_n,
 \qquad
 k^{-n/2}\sigma_{k,n}\longrightarrow\sigma_n^{\mathrm{sym}},
\]
and
\[
 (\sigma_n^{\mathrm{sym}})^2=(2n-1)!!,
 \qquad
 B^{\R}_{k,n}\longrightarrow b_n^{\R}.
\]
Moreover, $\widehat H_n$ has a bounded continuous even density
$f_{\widehat H_n}$, this density is maximal at zero, and
\[
 \|f_{\widehat H_n}\|_\infty=f_{\widehat H_n}(0)
 \le \frac{b_n^{\R}}2\le\frac1{\sqrt\pi}n^{3/8}.
\]
For every $z\in\R$ and $\varepsilon\ge0$,
\[
 \Pp\{\abs{H_n-z}
       \le\varepsilon\sigma_n^{\mathrm{sym}}\}
 \le \min\{1,b_n^{\R}\varepsilon\}
 \le \min\!\left\{1,
       \frac2{\sqrt\pi}n^{3/8}\varepsilon\right\}.
\]
\end{theorem}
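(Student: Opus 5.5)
The plan is to derive Theorem~\ref{thm:symmetric-limit} from Theorem~\ref{thm:main} through a limit $k\to\infty$ at fixed $n$. The density bound is then inherited through a uniform Lipschitz bound on distribution functions, not through convergence of densities.

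\emph{Step 1: weak convergence.} Write $k^{-1/2}\langle x_i,x_j\rangle=k^{-1/2}\sum_{a=1}^k X_{ai}X_{aj}$ for $i<j$. The vector of these $\binom{2n}{2}$ off-diagonal entries is a normalized sum of i.i.d.\ vectors $(X_{ai}X_{aj})_{i<j}$. These have mean zero and covariance $\E[X_{ai}X_{aj}X_{ai'}X_{aj'}]=\mathbf 1\{\{i,j\}=\{i',j'\}\}$, so the covariance is the identity. The multivariate CLT gives joint convergence to independent standard normals, which are exactly the above-diagonal entries of $S_n$. Since each matching term is a product of $n$ entries, $k^{-n/2}H_{k,n}$ is a fixed polynomial in these normalized entries. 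The continuous mapping theorem then yields $k^{-n/2}H_{k,n}\Rightarrow H_n$.

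\emph{Step 2: moments and constants.} From \eqref{eq:exact-rms}, $k^{-n}\sigma_{k,n}^2=(2n-1)!!\prod_{q<n}(1+2q/k)\to(2n-1)!!$. Independently, $\E H_n^2=\sum_{\mathfrak m,\mathfrak m'}\E\prod\prod$. Distinct matchings share a missing edge, which contributes an odd moment and vanishes, so $\E H_n^2=|\M_{2n}|=(2n-1)!!$. This gives $(\sigma_n^{\mathrm{sym}})^2=(2n-1)!!$ and the convergence of root mean squares. Moments of order four are uniformly bounded (for instance by hypercontractivity of degree-$2n$ Gaussian chaos), so the family is uniformly integrable, which is consistent with the moment limit.

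For $B^{\R}_{k,n}$, Stirling (or $\Gamma((d-1)/2)/\Gamma(d/2)\sim (d/2)^{-1/2}$) gives $\gamma_d\sim d^{-1/2}$. Hence $\prod_{j<n}\gamma_{k-j}\sim k^{-n/2}$, and together with $\sigma_{k,n}\sim k^{n/2}\sqrt{(2n-1)!!}$ this yields $B^{\R}_{k,n}\to b_n^{\R}$. Letting $k\to\infty$ in \eqref{eq:elementary-B}, where the exponential factor tends to $1$, gives $b_n^{\R}\le\frac{2}{\sqrt\pi}n^{3/8}$.

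\emph{Step 3: density.} Set $Y_k=H_{k,n}/\sigma_{k,n}$. By \eqref{eq:exact-small-ball}, $\Pp\{Y_k\in[z-\varepsilon,z+\varepsilon]\}\le B^{\R}_{k,n}\varepsilon$ for all $z$. Since $Y_k\Rightarrow \widehat H_n$, the portmanteau theorem applied to open intervals, together with monotone approximation, gives $\Pp\{|\widehat H_n-z|<\varepsilon\}\le b_n^{\R}\varepsilon$. It follows that the distribution function is Lipschitz with constant $b_n^{\R}/2$, so a density bounded by $b_n^{\R}/2$ exists. The small-ball statement for $H_n$ follows after rescaling by $\sigma_n^{\mathrm{sym}}$. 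Evenness comes from $S_n\mapsto -S_n$, under which $H_n\mapsto(-1)^nH_n$ (or by sign-flipping a single row and column when $n$ is odd).

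\emph{Main obstacle.} The delicate part is continuity, the identity $\|f\|_\infty=f(0)$, and the value of $f(0)$ itself; these do not pass through weak limits. I would instead redo the Gaussian mixture argument directly for $S_n$. Expanding along the last column gives $H_n=\sum_{i<2n}S_{i,2n}\,\haf(S_n^{(i,2n)})$, which conditionally on the other entries is $N(0,V)$ with $V=\sum_i\haf(\cdot)^2$. The density is then $f(z)=\E[(2\pi V)^{-1/2}e^{-z^2/2V}]$, which is continuous, even, and maximal at $0$ by dominated convergence, provided $\E V^{-1/2}<\infty$. That finiteness should follow from the preceding bound via $f(0)=\lim_\varepsilon\Pp\{|H_n|\le\varepsilon\}/2\varepsilon$ together with monotone convergence on the mixture. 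This step, making the mixture representation rigorous and sharing the bound $f(0)\le b_n^{\R}/2$, is where I expect the most care to be needed.
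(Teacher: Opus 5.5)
Your proposal is correct and follows essentially the paper's route: the multivariate CLT with continuous mapping, the exact second moments together with $\sqrt d\,\gamma_d\to1$, a portmanteau transfer of the shifted interval bound, and finally the last-vertex Gaussian mixture for $S_n$, where a monotone-convergence version of the Fatou converse \eqref{eq:mixture-fatou} turns the interval bound into $\E V^{-1/2}\le\prod_{r=2}^n\gamma_{2r-1}<\infty$. A few small points remain. Deriving $b_n^{\R}\le\frac2{\sqrt\pi}n^{3/8}$ by letting $k\to\infty$ in \eqref{eq:elementary-B} is legitimate (the paper instead proves \eqref{eq:bn-three-eighth} directly and deduces \eqref{eq:elementary-B} from it); the mixture needs $V>0$ almost surely, which the paper obtains by the zero-set argument but which also follows from your interval bound because $\{V=0\}\subseteq\{H_n=0\}$; and your evenness parity is reversed, since $S_n\mapsto-S_n$ already works for odd $n$ and the row-and-column sign flip is what is needed for even $n$.
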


\begin{proof}
Section~\ref{app:symmetric-limit-weak} proves the weak-limit interval
bound.  Section~\ref{app:symmetric-limit-density} then obtains the inverse
moment and density by Fatou's lemma.  The finite $3/8$ estimate is proved
in Section~\ref{sec:constant}.
\end{proof}

For comparison, let $G_n^{\R}\in\R^{n\times n}$ have independent standard
normal entries, and let $A_n\in\R^{2n\times2n}$ be skew symmetric with
independent standard normal upper
entries.  Write $P_n=\operatorname{per}(G_n^{\R})$,
$D_n=\det(G_n^{\R})$, and $\mathrm{Pf}_n=\pf(A_n)$, where $\pf$ denotes
the Pfaffian, the signed sum over perfect matchings defined in
Appendix~\ref{app:four-cofactor}.  Hats denote division by the root mean
square.  Define
$\kappa_{n,1}:=\sqrt{n!}/(2^{n/2}\Gamma(n/2))$.
Table~\ref{tab:four-polynomials} compares density peaks, not interval
coefficients.  The permanent entry is
\cite[Theorem~1.1, Eqs.~(20), (26)]{KoehlerLeung2026}: their $\beta=1$
variable $W_n^{\R}$ is precisely $\widehat P_n$ with unit-variance entries.
The determinant and Pfaffian values are proved in
Appendix~\ref{app:benchmarks}, and the hafnian bound is
\cref{thm:symmetric-limit}.  Stirling's formula \cite[Eq.~(5.11.3)]{NISTDLMFStirling} gives the displayed
asymptotic for $\kappa_{n,1}$; \eqref{eq:bnR-barnes-asymptotic} gives the
one for $b_n^{\R}$.  Here $\mathcal G$ denotes the Barnes $G$ function,
recalled in Section~\ref{sec:exact-comparison-scales}.
An asymptotic beside a hafnian upper bound describes
the coefficient, not the unknown hafnian density peak.

\begin{table}[ht]
\caption{Peak densities of four root mean square normalized real Gaussian
matrix polynomials.  An equality is an exact value; an inequality is the
proved upper bound.}
\label{tab:four-polynomials}
\centering
\small
\setlength{\tabcolsep}{4pt}
\begin{tabularx}{\linewidth}{@{}>{\raggedright\arraybackslash}X l
  @{\hspace{16pt}}l@{}}
\toprule
Model & Normalized variable & Density maximum at zero \\
\midrule
Real Ginibre permanent
  & $\widehat P_n=P_n/\sqrt{n!}$
  & $\|f_{\widehat P_n}\|_\infty=f_{\widehat P_n}(0)
       \le\kappa_{n,1}\sim(8\pi)^{-1/4}n^{3/4}$ \\
Real Ginibre determinant
  & $\widehat D_n=D_n/\sqrt{n!}$
  & $\|f_{\widehat D_n}\|_\infty=f_{\widehat D_n}(0)
       =\kappa_{n,1}\sim(8\pi)^{-1/4}n^{3/4}$ \\
Real symmetric Gaussian hafnian
  & $\widehat H_n=H_n/\sqrt{(2n-1)!!}$
  & $\|f_{\widehat H_n}\|_\infty=f_{\widehat H_n}(0)
       \le b_n^{\R}/2\sim\mathcal G(1/2)\pi^{-1/4}n^{3/8}$ \\
Real skew symmetric Gaussian \mbox{Pfaffian}
  & $\widehat{\mathrm{Pf}}_n=\mathrm{Pf}_n/\sqrt{(2n-1)!!}$
  & $\|f_{\widehat{\mathrm{Pf}}_n}\|_\infty
       =f_{\widehat{\mathrm{Pf}}_n}(0)=b_n^{\R}/2
       \sim\mathcal G(1/2)\pi^{-1/4}n^{3/8}$ \\
\bottomrule
\end{tabularx}
\end{table}

The orderings are weak.  At $n=1$ each hafnian and Pfaffian is one
standard Gaussian entry.  At $n=2$, the matching expansions give
\[
 H_2=x_{12}x_{34}+x_{13}x_{24}+x_{14}x_{23},\qquad
 \mathrm{Pf}_2=x_{12}x_{34}-x_{13}x_{24}+x_{14}x_{23}.
\]
Changing $x_{13}$ to $-x_{13}$ preserves the independent Gaussian law,
so the raw and equally normalized laws agree.  The same sign-flip argument
applied to $ad+bc$ and $ad-bc$ proves the permanent--determinant agreement
at orders one and two.  Strict peak inequalities for higher orders are not
asserted.

\section{Gaussian mixtures and odd cofactors}
\label{sec:mixture}

\begin{lemma}[Gaussian-mixture principle]
\label[lemma]{lem:mixture}
Suppose $R>0$ almost surely and $X\mid R\sim N(0,R^2)$.  Then
\[
 f_X(x)=\frac1{\sqrt{2\pi}}\E\left[R^{-1}e^{-x^2/(2R^2)}\right]
\]
is a density, possibly infinite at zero.  If $m=\E R^{-1}<\infty$, it is
continuous, even, and nonincreasing in $|x|$, with maximum
$m/\sqrt{2\pi}$.  For $c>0$, its normalized density is $c f_X(cx)$, and for $\varepsilon\ge0$,
\[
 \sup_z\Pp\{|X-z|\le c\varepsilon\}
 \le\min\{1,\sqrt{2/\pi}\,cm\varepsilon\}.
\]
Conversely, if $\Pp\{|X|\le h\}\le2Lh$ for all $h>0$, then
$m\le\sqrt{2\pi}L$.  In particular, a bounded density exists if and only
if $m<\infty$.
\end{lemma}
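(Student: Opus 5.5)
The plan is to condition on $R$ and apply Tonelli's theorem throughout. First I establish the density formula. For any Borel set $A$,
\[
 \Pp\{X\in A\}=\E\bigl[\Pp\{X\in A\mid R\}\bigr]
 =\E\int_A \frac{1}{\sqrt{2\pi}R}e^{-x^2/(2R^2)}\dd x .
\]
Since the integrand is nonnegative, Tonelli lets me swap the expectation and the integral. This gives $\Pp\{X\in A\}=\int_A f_X$, with $f_X$ valued in $[0,\infty]$. Integrating over all of $\R$ shows $\int f_X=1$, so $f_X$ is a density; it may be infinite at zero.

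Next I derive the regularity properties under the hypothesis $m=\E R^{-1}<\infty$. For each fixed $R$, the map $x\mapsto R^{-1}e^{-x^2/(2R^2)}$ is even and nonincreasing in $|x|$, and it is dominated by $R^{-1}$. Dominated convergence with the integrable dominator $R^{-1}$ then gives continuity of $f_X$. Evenness and monotonicity in $|x|$ pass through the expectation pointwise. Hence the supremum is attained at zero and equals $f_X(0)=m/\sqrt{2\pi}$.

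The rescaling claim is simply the change of variables formula for the law of $X/c$, whose density is $cf_X(cx)$. For the interval bound, $\Pp\{|X-z|\le c\varepsilon\}=\int_{z-c\varepsilon}^{z+c\varepsilon}f_X\le 2c\varepsilon\, m/\sqrt{2\pi}=\sqrt{2/\pi}\,cm\varepsilon$. Combining this with the trivial bound $1$ gives the stated minimum.

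For the converse, Tonelli gives
\[
 \Pp\{|X|\le h\}/(2h)=\E\Bigl[\tfrac1{2h}\textstyle\int_{-h}^{h}\tfrac{1}{\sqrt{2\pi}R}e^{-x^2/(2R^2)}\dd x\Bigr].
\]
As $h\downarrow0$, the inner average increases to $1/(\sqrt{2\pi}R)$. It increases because the Gaussian density is nonincreasing in $|x|$, so shrinking the window raises the average. By monotone convergence, the left side therefore tends to $m/\sqrt{2\pi}$. The hypothesis bounds the left side by $L$, which yields $m\le\sqrt{2\pi}L$.

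Finally, I prove the equivalence. If $m<\infty$, then $f_X$ is bounded, as shown above. Conversely, suppose some density $g$ of $X$ is bounded by $L$, so that $\Pp\{|X|\le h\}\le 2Lh$. The converse step then gives $m<\infty$.

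The only delicate point is the monotone convergence in the converse. That argument needs the averages to be monotone in $h$ and does not rely on $f_X$ being finite. Because it avoids finiteness of $f_X$, it remains valid even when $f_X(0)=\infty$ a priori.
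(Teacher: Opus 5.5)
Your proposal is correct and follows the paper's route: Tonelli for the mixture density, domination by $R^{-1}$ for continuity and the peak at zero, and integration over an interval of length $2c\varepsilon$ for the shifted bound. The only difference is in the converse, where you use monotone convergence instead of the paper's Fatou-plus-continuity argument; this is valid because each kernel is nonincreasing in $|x|$, so the window averages increase as $h\downarrow0$, and it gives the exact limit $m/\sqrt{2\pi}$ rather than only an inequality.
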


\begin{proof}
This is the elementary scale-mixture mechanism of
\cite{AndrewsMallows1974}.  Let
$q_r(x)=(\sqrt{2\pi}r)^{-1}e^{-x^2/(2r^2)}$.
Conditioning and Tonelli give, for every Borel set $D$,
$\Pp\{X\in D\}=\int_D\E q_R(x)\,\dd x$.
Each kernel is even and decreases with $|x|$.  If $m<\infty$, domination
by $R^{-1}/\sqrt{2\pi}$ proves continuity and the maximum formula.
A change of variables gives the normalized density, and integration over
an interval of length $2c\varepsilon$ gives the probability bound.
For the converse, continuity of each conditional kernel and Fatou give
\begin{equation}
 \frac{\E R^{-1}}{\sqrt{2\pi}}
 \le\liminf_{h\downarrow0}\E\left[\frac1{2h}\int_{-h}^h q_R(x)\,\dd x\right]
 =\liminf_{h\downarrow0}\frac{\Pp\{|X|\le h\}}{2h}\le L.
 \label{eq:mixture-fatou}
\end{equation}
The argument uses only nonnegative integrals until finiteness is established.
\end{proof}

\subsection{Odd cofactors and row suspension}
\label{sec:cofactor}

For column vectors $v_1,\ldots,v_m$ of the same dimension, write
$\Gram(v_1,\ldots,v_m)=(\langle v_i,v_j\rangle)_{i,j=1}^{m}$.
For $r\ge1$ put $m_r=2r-1$.  Let
$A=[a_1\ \cdots\ a_{m_r}]\in\R^{k\times m_r}$ and set
$S=A^{\T}A$.  Define the entries of the odd hafnian cofactor column
$C_{k,r}(A)=(C_{k,r,i}(A))_{i=1}^{m_r}\in\R^{m_r}$ by
\begin{equation}
 C_{k,r,i}(A)=\haf(S_{-i}),
 \qquad 1\le i\le m_r,
 \label{eq:cofactor-vector}
\end{equation}
where $S_{-i}$ deletes row and column $i$.  The column
$q_{k,r}(A)\in\R^k$ and its scalar squared norm are
\begin{equation}
 q_{k,r}(A)=\sum_{i=1}^{m_r}C_{k,r,i}(A)a_i=AC_{k,r}(A),
 \qquad
 V_{k,r}(A)=\norm{q_{k,r}(A)}^2.
 \label{eq:qV-definition}
\end{equation}

If $x$ is an additional column, hafnian expansion along $x$ gives
\begin{equation}
 \haf\Gram(a_1,\ldots,a_{m_r},x)
 =\sum_{i=1}^{m_r}\langle a_i,x\rangle C_{k,r,i}(A)
 =\langle q_{k,r}(A),x\rangle.
 \label{eq:last-column-expansion}
\end{equation}
In particular, conditionally on $A$, the left side of
\eqref{eq:last-column-expansion} is $N(0,V_{k,r}(A))$ when $x$ is an
independent standard Gaussian column.

The same expansion has a useful deterministic version.  Fix the first
standard basis column $e_1\in\R^k$ and define the augmented hafnian
\begin{equation}
 P_{k,r}(e_1;A):=\haf\Gram(a_1,\ldots,a_{m_r},e_1).
 \label{eq:augmented-definition}
\end{equation}
Expanding along $e_1$ yields the row suspension identity
\begin{equation}
 P_{k,r}(e_1;A)=\langle e_1,q_{k,r}(A)\rangle.
 \label{eq:row-suspension-identity}
\end{equation}
No generality is lost: the Gaussian column law is orthogonally invariant,
and \cref{lem:radiality} makes this reduction explicit.

\begin{lemma}[Radiality]
\label[lemma]{lem:radiality}
If the columns of $A$ are independent standard Gaussians, then the law of
$q_{k,r}(A)$ is invariant under the orthogonal group $O(k)$.
\end{lemma}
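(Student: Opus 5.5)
The plan is to verify that $q_{k,r}$ is $O(k)$-equivariant and then use the $O(k)$-invariance of the joint law of the columns.

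Fix $U\in O(k)$ and set $UA=[Ua_1\ \cdots\ Ua_{m_r}]$. The Gram matrix is unchanged:
\[
 (UA)^{\T}(UA)=A^{\T}U^{\T}UA=A^{\T}A=S .
\]
Each entry $C_{k,r,i}(UA)=\haf(S_{-i})$ is a polynomial in the entries of $S$ alone. Hence $C_{k,r}(UA)=C_{k,r}(A)$, and by \eqref{eq:qV-definition},
\[
 q_{k,r}(UA)=(UA)\,C_{k,r}(UA)=U\,A\,C_{k,r}(A)=U\,q_{k,r}(A).
\]
So $q_{k,r}$ is an equivariant polynomial map $\R^{k\times m_r}\to\R^k$. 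As a side consequence, $V_{k,r}(UA)=V_{k,r}(A)$.

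Next, the columns $a_1,\dots,a_{m_r}$ are independent $N(0,I_k)$ vectors. For each $i$ the vector $Ua_i$ is again $N(0,I_k)$, since its covariance is $UU^{\T}=I_k$. Independence is preserved because $U$ acts columnwise. Thus $UA\overset{d}{=}A$ as random matrices. Applying the measurable map $q_{k,r}$ to both sides gives
\[
 U\,q_{k,r}(A)=q_{k,r}(UA)\overset{d}{=}q_{k,r}(A),
\]
which is the asserted invariance.

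For the use made of the lemma in the row suspension reduction, I would also record the consequence that follows from radiality. The vector $q_{k,r}(A)$ has the same law as $\norm{q_{k,r}(A)}\,\theta$, with $\theta$ uniform on the sphere and independent of the norm. It follows that $\langle e_1,q_{k,r}(A)\rangle=P_{k,r}(e_1;A)$ has the same law as $\langle u,q_{k,r}(A)\rangle$ for every unit vector $u$. For this decomposition I would invoke the standard fact that an $O(k)$-invariant law on $\R^k$ disintegrates into its radial part and the uniform measure on the sphere. Here $q=0$ needs no separate treatment, because the decomposition holds trivially on that event.

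There is no real obstacle. The only point needing care is that the cofactors depend on $A$ only through $A^{\T}A$, and this holds because the hafnian of $S_{-i}$ uses only off-diagonal Gram entries.
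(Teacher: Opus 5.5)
Your proof is correct and matches the paper's: the Gram matrix $A^{\T}A$ is unchanged under $A\mapsto UA$, so the cofactor vector is unchanged, $q_{k,r}(UA)=Uq_{k,r}(A)$, and $UA\overset{d}{=}A$ finishes the argument. The radial--spherical disintegration you add is correct but unnecessary. Choosing $U\in O(k)$ with $Ue_1=u$ gives $\langle u,q_{k,r}(A)\rangle=\langle e_1,U^{\T}q_{k,r}(A)\rangle\overset{d}{=}\langle e_1,q_{k,r}(A)\rangle$ directly. Also, the cofactors are invariant because all of $S$, diagonal included, is unchanged, not because the hafnian ignores the diagonal.
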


\begin{proof}
For $U\in O(k)$, one has $(UA)^{\T}(UA)=A^{\T}A$.  Hence the cofactor
vector is unchanged and $q_{k,r}(UA)=Uq_{k,r}(A)$.  Since $UA$ and $A$
have the same law, the claim follows.
\end{proof}

The following basic fact handles radius zero and justifies the density
mixture without a hidden singular component.

\begin{lemma}[Almost sure positivity]
\label[lemma]{lem:positivity}
If $r,k\ge1$, then
\begin{equation}
 \Pp\{V_{k,r}=0\}=0.
 \label{eq:V-positive}
\end{equation}
Moreover $\Pp\{H_{k,r}=0\}=0$ for the $2r$-column Gram hafnian.
\end{lemma}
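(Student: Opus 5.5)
The plan is to treat both quantities as polynomials in the Gaussian entries and use the standard fact that a polynomial which is not identically zero vanishes only on a Lebesgue-null set. The law of $A$ (and of $X$) is absolutely continuous on the relevant Euclidean space, so a null zero set gives probability zero. This reduces both claims to exhibiting a single real configuration at which the polynomial is nonzero. Here $V_{k,r}(A)=\norm{AC_{k,r}(A)}^2$ is a polynomial in the entries of $A$, and $H_{k,r}=\haf\Gram(x_1,\ldots,x_{2r})$ is a polynomial in the entries of $X\in\R^{k\times 2r}$.

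For $H_{k,r}$, the witness is immediate. Take all columns equal to $e_1$. Then every Gram entry is $1$, so each of the $(2r-1)!!$ matchings contributes $1$, and $H_{k,r}=(2r-1)!!\neq0$. This works for every $k\ge1$.

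For $V_{k,r}$, I would also take all $m_r=2r-1$ columns equal to $e_1$. Then $S$ is the all-ones matrix of size $2r-1$. Each cofactor $\haf(S_{-i})$ is the hafnian of the all-ones matrix of size $2r-2$, namely $(2r-3)!!>0$, with the convention that the empty hafnian equals $1$ when $r=1$. Hence $q_{k,r}(A)=(2r-1)(2r-3)!!\,e_1=(2r-1)!!\,e_1$ and $V_{k,r}(A)=((2r-1)!!)^2>0$.

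Alternatively, the row suspension identity \eqref{eq:row-suspension-identity} gives the same witness at once. It yields $\langle e_1,q_{k,r}(A)\rangle=\haf\Gram(e_1,\ldots,e_1)=(2r-1)!!$, which is exactly the $H$ witness with $2r$ columns. Either way, $V_{k,r}\not\equiv0$ and $H_{k,r}\not\equiv0$ as polynomials.

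The only step needing care is the zero-set fact, which is where I expect the main (mild) obstacle. I would prove it by induction on the number of variables. Write $p(y,t)=\sum_j c_j(y)t^j$ with some $c_j\not\equiv0$. By the inductive hypothesis, $c_j(y)\neq0$ off a null set of $y$. For each such $y$, the map $t\mapsto p(y,t)$ is a nonzero univariate polynomial, so it has finitely many zeros. Tonelli then gives $\Pp\{p=0\}=0$. Applying this to $V_{k,r}$ under the Gaussian law of $A$, and to $H_{k,r}$ under the law of $X$, proves \eqref{eq:V-positive} and $\Pp\{H_{k,r}=0\}=0$.
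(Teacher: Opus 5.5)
Your proof is correct and is essentially the paper's argument: the same all-$e_1$ witness, at which $q_{k,r}(A)=(2r-1)!!\,e_1\neq0$, combined with the Lebesgue-null zero set of a nonzero polynomial, which the paper cites from Mityagin where you prove it by induction and Tonelli. The one variation is in the hafnian clause: the paper deduces $\Pp\{H_{k,r}=0\}=0$ from the first claim by conditioning on the first $2r-1$ columns, where $H_{k,r}\mid A\sim N(0,V_{k,r}(A))$ is almost surely nondegenerate, whereas you apply the zero-set argument directly with the witness $H_{k,r}=(2r-1)!!$; both routes are valid.
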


\begin{proof}
The first coordinate of $q_{k,r}$ is a polynomial in the entries of $A$.
At $a_1=\cdots=a_{2r-1}=e_1$, every cofactor counts the matchings of
$2r-2$ vertices, so
\[
 q_{k,r}(A)=(2r-1)!!\,e_1\ne0.
\]
Here $(-1)!!=1$ handles $r=1$.  The polynomial is therefore nonzero.
Its zero set is Lebesgue null \cite{Mityagin2020ZeroSet}, hence Gaussian
null.  Thus $q_{k,r}\ne0$ almost surely.  Conditional on $A$,
\eqref{eq:last-column-expansion} is nondegenerate Gaussian, proving the
hafnian nonvanishing assertion as well.
\end{proof}

\section{An exact bilinear Gaussian interpolation}
\label{sec:kernel}

The analytic engine of the compression is elementary but exact.  The kernel below is the specialization of
\cite[Lemmas~3.1--3.2]{KoehlerLeung2026} with
$T_{\rm KL}=tT$, $L=I$, $a=tV$, and $b=tU$.
We include its proof to fix the variables used by the matching decomposition.

\begin{lemma}[Bilinear Gaussian kernel]
\label[lemma]{lem:kernel}
Let $X,Y$ be independent standard Gaussian column vectors in $\R^d$, let
$T\in\R^{d\times d}$, $U,V\in\R^d$, and $t\in\R$.  Put
\begin{equation}
 F_{t,T}(U,V)
 :=\E\exp\{it(X^{\T}TY+X\cdot U+Y\cdot V)\}.
 \label{eq:F-definition}
\end{equation}
Set $D=I_d+t^2TT^{\T}$ and $E=I_d+t^2T^{\T}T$.  Then
\begin{align}
 F_{t,T}(U,0)
 &=\det(D)^{-1/2}
 \exp\!\left\{-\frac{t^2}{2}U^{\T}D^{-1}U\right\},
 \label{eq:F-left}\\
 F_{t,T}(0,V)
 &=\det(E)^{-1/2}
 \exp\!\left\{-\frac{t^2}{2}V^{\T}E^{-1}V\right\}.
 \label{eq:F-right}
\end{align}
Both quantities are strictly positive.  For every $0\le\theta\le1$,
\begin{equation}
 \abs{F_{t,T}(\sqrt\theta\,U,\sqrt{1-\theta}\,V)}
 =F_{t,T}(U,0)^\theta F_{t,T}(0,V)^{1-\theta}.
 \label{eq:geometric-interpolation}
\end{equation}
\end{lemma}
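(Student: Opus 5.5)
We compute the three quantities in turn by Gaussian integration. The plan is to do the $X$-integral first and the $Y$-integral second for \eqref{eq:F-left}, to use the symmetric order for \eqref{eq:F-right}, and to derive \eqref{eq:geometric-interpolation} from an exact factorization.

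\textbf{Formula \eqref{eq:F-left}.} Condition on $Y$. Since $X$ is standard Gaussian, $\E_X\exp\{itX^{\T}(TY+U)\}=\exp\{-\tfrac{t^2}{2}\norm{TY+U}^2\}$. It then remains to compute
\[
\E_Y\exp\{-\tfrac{t^2}{2}(Y^{\T}T^{\T}TY+2U^{\T}TY+U^{\T}U)\}.
\]
This is a real Gaussian integral with quadratic form $\tfrac12Y^{\T}EY$ and linear term $-t^2T^{\T}U$. Completing the square gives
\[
\det(E)^{-1/2}\exp\{\tfrac{t^4}{2}U^{\T}TE^{-1}T^{\T}U-\tfrac{t^2}{2}U^{\T}U\}.
\]
Two matrix identities then put this in the stated form. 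First, $\det E=\det D$, by Sylvester's determinant identity. Second, the push-through identity gives $I-t^2TE^{-1}T^{\T}=D^{-1}$, since $TE^{-1}=D^{-1}T$ and $I-t^2D^{-1}TT^{\T}=D^{-1}$. Hence the exponent equals $-\tfrac{t^2}{2}U^{\T}D^{-1}U$.

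\textbf{Formula \eqref{eq:F-right}.} Integrate $Y$ first instead. This yields $\exp\{-\tfrac{t^2}{2}\norm{T^{\T}X+V}^2\}$, and the same computation with $T$ replaced by $T^{\T}$ gives the formula. Both expressions are products of positive determinants to the power $-1/2$ and real exponentials, so both are strictly positive.

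\textbf{Interpolation \eqref{eq:geometric-interpolation}.} This is the step that needs care, because the mixed term is complex. Run the same two-step computation with both $U$ and $V$ present:
\[
F_{t,T}(U,V)=\E_Y\exp\{itY\cdot V-\tfrac{t^2}{2}\norm{TY+U}^2\}.
\]
Completing the square now involves the complex linear term $itV-t^2T^{\T}U$. The result is
\[
\det(E)^{-1/2}\exp\{-\tfrac{t^2}{2}U^{\T}D^{-1}U-\tfrac{t^2}{2}V^{\T}E^{-1}V-it^3V^{\T}E^{-1}T^{\T}U\}.
\]
To check the real part of the exponent, note that $\tfrac12(itV-t^2T^{\T}U)^{\T}E^{-1}(itV-t^2T^{\T}U)$ has real part $-\tfrac{t^2}{2}V^{\T}E^{-1}V+\tfrac{t^4}{2}U^{\T}TE^{-1}T^{\T}U$ and imaginary cross term $-t^3V^{\T}E^{-1}T^{\T}U$. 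Combined with the $-\tfrac{t^2}{2}U^{\T}U$ term, the real part becomes $-\tfrac{t^2}{2}U^{\T}D^{-1}U-\tfrac{t^2}{2}V^{\T}E^{-1}V$, by the same identity as before.

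Taking moduli removes the imaginary cross term, so
\[
\abs{F_{t,T}(U,V)}=\det(E)^{-1/2}\exp\{-\tfrac{t^2}{2}U^{\T}D^{-1}U-\tfrac{t^2}{2}V^{\T}E^{-1}V\}.
\]
Now substitute $\sqrt\theta\,U$ and $\sqrt{1-\theta}\,V$. Each quadratic form picks up the factor $\theta$ or $1-\theta$. Writing $\det(E)^{-1/2}=\det(D)^{-\theta/2}\det(E)^{-(1-\theta)/2}$, using $\det D=\det E$, gives exactly $F_{t,T}(U,0)^\theta F_{t,T}(0,V)^{1-\theta}$.

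The only delicate point is that completing the square with a complex linear term is legitimate. For fixed real positive definite $E$, both sides of the Gaussian integral formula are entire functions of the linear coefficient and agree for real coefficients, so they agree everywhere by analytic continuation. Alternatively, one can apply the characteristic function formula for the Gaussian vector with density proportional to $e^{-Y^{\T}EY/2}$ directly.
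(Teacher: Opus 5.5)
Your proof is correct and follows essentially the same route as the paper's proof in Appendix~\ref{app:kernel}. Both integrate out $X$, complete the square in $Y$ with the complex linear coefficient $w=itV-t^2T^{\T}U$ (justified by analytic continuation), use $D^{-1}=I_d-t^2TE^{-1}T^{\T}$ and $\det D=\det E$, and note that the mixed term is purely imaginary; your separate derivation of \eqref{eq:F-right} by integrating $Y$ first is valid but redundant, since setting $U=0$ in your general formula already gives it.
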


\begin{proof}Appendix~\ref{app:kernel} gives the Gaussian integral calculation.\end{proof}

\section{Two coordinate compression}
\label{sec:compression}

Use the fixed vector $e_1$ from \eqref{eq:augmented-definition} and
decompose the Gaussian columns as
\begin{equation}
 a_i=\begin{pmatrix}y_i\\ b_i\end{pmatrix}\in\R^k,
 \qquad y_i\in\R,
 \qquad b_i\in\R^{k-1}.
 \label{eq:row-splitting}
\end{equation}
Collect the first coordinates in the column
$y=(y_1,\ldots,y_{m_r})^{\T}\in\R^{m_r}$, and let
$B=[b_1\ \cdots\ b_{m_r}]\in\R^{(k-1)\times m_r}$ be the matrix of lower
columns.  Thus
\[
 A=\begin{pmatrix}y^{\T}\\ B\end{pmatrix},
\]
and the first row of $A$ is $y^{\T}$.  For deterministic $y$ and independent
standard Gaussian columns of $B$, write
\begin{equation}
 P_r(y,B):=P_{k,r}(e_1;A),
 \qquad
 \Psi_{r,t}(y):=\E_B e^{itP_r(y,B)}.
 \label{eq:Psi-definition}
\end{equation}
The dependence on $k$ is suppressed in this section.

\subsection{The local matching decomposition}

Fix distinct $p,q\in\{1,\ldots,m_r\}$.  Put
$x=y_p$, $y'=y_q$, $X=b_p$, and $Y=b_q$, so $x,y'\in\R$ and
$X,Y\in\R^{k-1}$ are columns.
For the matching discussion, relabel the appended deterministic column
$a_0=e_1$ as index $0$.  Thus the augmented index set is
$\{0,1,\ldots,m_r\}$, with $y_0=1$ and $b_0=0\in\R^{k-1}$.
For a subset obtained by
deleting indices, let $h$ denote the hafnian of the remaining augmented
Gram matrix; in particular, write $h_{pq}$ after deleting $p,q$ and
$h_{pq\alpha\beta}$ after also deleting $\alpha,\beta$.

With $J=\{0,\ldots,m_r\}\setminus\{p,q\}$, define the background matrix
$T\in\R^{(k-1)\times(k-1)}$, columns $u,v\in\R^{k-1}$, and scalar
$a\in\R$ by
\begin{align*}
 T&=h_{pq}I_{k-1}+
 \sum_{\substack{\alpha,\beta\in J\\\alpha\ne\beta}}
 h_{pq\alpha\beta}\,b_\alpha b_\beta^{\T},\\
 u&=\sum_{\substack{\alpha,\beta\in J\\\alpha\ne\beta}}
 h_{pq\alpha\beta}\,y_\beta b_\alpha,
 \qquad
 v=\sum_{\substack{\alpha,\beta\in J\\\alpha\ne\beta}}
 h_{pq\alpha\beta}\,y_\alpha b_\beta,\\
 a&=h_{pq}+
 \sum_{\substack{\alpha,\beta\in J\\\alpha\ne\beta}}
 h_{pq\alpha\beta}\,y_\alpha y_\beta.
\end{align*}

\begin{lemma}[Exposed pair decomposition]
\label[lemma]{lem:local-decomposition}
Conditionally on every variable except $X,Y$, the augmented hafnian has the
form
\begin{equation}
 P_r(y,B)=X^{\T}TY+y'X\cdot u+xY\cdot v+a x y',
 \label{eq:local-phase}
\end{equation}
where $T,u,v,a$ depend only on the fixed background and not on
$x,y',X,Y$.
\end{lemma}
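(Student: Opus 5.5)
The plan is to expand the augmented hafnian $P_r(y,B)=\haf\Gram(a_0,a_1,\ldots,a_{m_r})$, indexed by $\{0,1,\ldots,m_r\}$ (an even set of size $2r$), according to the partners of the two exposed indices $p$ and $q$. Every perfect matching falls into exactly one of two classes. Either $p$ and $q$ are matched to each other, or $p$ is matched to some $\alpha\in J$ and $q$ to some $\beta\in J$ with $\alpha\ne\beta$. Summing the products over the remaining matchings of the leftover vertices gives
\[
 P_r=\langle a_p,a_q\rangle h_{pq}+\sum_{\substack{\alpha,\beta\in J\\ \alpha\ne\beta}}\langle a_p,a_\alpha\rangle\langle a_q,a_\beta\rangle h_{pq\alpha\beta}.
\]
The ordered pair $(\alpha,\beta)$ correctly records the partner of $p$ and the partner of $q$ separately, so there is no double counting. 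Crucially, the deleted hafnians $h_{pq}$ and $h_{pq\alpha\beta}$ involve only Gram entries among indices in $J$. They therefore do not depend on $x,y',X,Y$.

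Next, insert the row splitting \eqref{eq:row-splitting}, writing $\langle a_i,a_j\rangle=y_iy_j+b_i\cdot b_j$. This holds for all indices, including $i=0$, where $y_0=1$ and $b_0=0$. Then
\[
 \langle a_p,a_q\rangle=xy'+X^{\T}Y,\qquad
 \langle a_p,a_\alpha\rangle=xy_\alpha+X^{\T}b_\alpha,\qquad
 \langle a_q,a_\beta\rangle=y'y_\beta+b_\beta^{\T}Y.
\]
Multiplying out each product $(xy_\alpha+X^{\T}b_\alpha)(y'y_\beta+b_\beta^{\T}Y)$ yields four terms:
\begin{itemize}
\item $X^{\T}b_\alpha b_\beta^{\T}Y$, which contributes to $T$;
\item $y'y_\beta X^{\T}b_\alpha$, which contributes $y'X\cdot u$;
\item $xy_\alpha b_\beta^{\T}Y$, which contributes $xY\cdot v$;
\item $xy'y_\alpha y_\beta$, which contributes to $a$.
\end{itemize}
Together with the pair term $h_{pq}(xy'+X^{\T}Y)$, which supplies $h_{pq}I_{k-1}$ in $T$ and $h_{pq}$ in $a$, these collect exactly into \eqref{eq:local-phase} with the stated $T,u,v,a$.

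The only point needing care is bookkeeping, not analysis. The index $0$ may appear as $\alpha$ or $\beta$; in that case $b_0=0$ kills the corresponding terms in $T,u,v$, which is consistent with the definitions since the sums include those indices. One should also confirm that the decomposition into classes is exhaustive and disjoint. It is, because $p$ and $q$ each have exactly one partner in any perfect matching. Finally, $T,u,v,a$ are built solely from $y_\alpha$, $b_\alpha$ ($\alpha\in J$) and deleted hafnians on $J$, which establishes the independence claim. I expect no real obstacle; the step most prone to error is keeping the roles of $\alpha$ (partner of $p$) and $\beta$ (partner of $q$) aligned with the transposes in $b_\alpha b_\beta^{\T}$ and in the definitions of $u$ and $v$.
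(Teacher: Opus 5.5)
Your proposal is correct and follows the paper's proof essentially verbatim: you partition the matchings by the partners of $p$ and $q$ (paired together, or an ordered pair $(\alpha,\beta)$ of distinct partners in $J$), expand each edge weight via the row splitting, and collect the four term types into $T,u,v,a$, with vertex $0$ handled uniformly through $y_0=1$, $b_0=0$. One phrase is loose: $b_0=0$ removes only the terms that actually contain $b_0$. For example, when $\alpha=0$ the contribution $h_{pq0\beta}\,b_\beta$ to $v$ survives. Since you check the identity $\langle a_i,a_0\rangle=y_iy_0+b_i\cdot b_0$ directly, your bookkeeping is nonetheless sound.
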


\begin{proof}
Partition the perfect matchings according to the partners of the exposed
vertices $p$ and $q$.  If $p$ and $q$ are paired together, their edge has
weight $\langle a_p,a_q\rangle=xy'+X\cdot Y$.  Removing this edge leaves
an arbitrary perfect matching of the remaining vertices, whose total
weight is $h_{pq}$.  Thus this class contributes
\begin{equation*}
 (xy'+X\cdot Y)h_{pq}.
\end{equation*}
Otherwise, $p$ is paired with a unique $\alpha\in J$, and $q$ with a
unique $\beta\in J\setminus\{\alpha\}$.  The pair $(\alpha,\beta)$ is
ordered: $\alpha$ is the partner of $p$, whereas $\beta$ is the partner
of $q$.  Removing these two edges leaves an arbitrary perfect matching
on the vertices outside $\{p,q,\alpha,\beta\}$.  Conversely, each such
remaining matching extends uniquely by adjoining the two specified
edges.  Hence the contribution for this ordered pair is
\begin{align*}
 &(xy_\alpha+X\cdot b_\alpha)
   (y'y_\beta+Y\cdot b_\beta)h_{pq\alpha\beta}\\
 &\quad=h_{pq\alpha\beta}\bigl[
 xy'y_\alpha y_\beta+x y_\alpha(Y\cdot b_\beta)
 +y'y_\beta(X\cdot b_\alpha)
 +(X\cdot b_\alpha)(Y\cdot b_\beta)\bigr].
\end{align*}
Every matching belongs to exactly one of these classes, so none is omitted
or counted twice.  In particular, interchanging $\alpha$ and $\beta$
changes the two edges and specifies a different class.

Using $(X\cdot b_\alpha)(Y\cdot b_\beta)
=X^{\T}b_\alpha b_\beta^{\T}Y$ and collecting the four types of terms gives
\begin{align*}
 P_r(y,B)
 ={}&X^{\T}\left(h_{pq}I_{k-1}+
 \sum_{\substack{\alpha,\beta\in J\\\alpha\ne\beta}}
 h_{pq\alpha\beta}b_\alpha b_\beta^{\T}\right)Y
 +y'X\cdot\left(
 \sum_{\substack{\alpha,\beta\in J\\\alpha\ne\beta}}
 h_{pq\alpha\beta}y_\beta b_\alpha\right)\\
 &+xY\cdot\left(
 \sum_{\substack{\alpha,\beta\in J\\\alpha\ne\beta}}
 h_{pq\alpha\beta}y_\alpha b_\beta\right)
 +xy'\left(h_{pq}+
 \sum_{\substack{\alpha,\beta\in J\\\alpha\ne\beta}}
 h_{pq\alpha\beta}y_\alpha y_\beta\right).
\end{align*}
The four parenthesized expressions are precisely $T,u,v,a$, respectively,
which proves \eqref{eq:local-phase}.  Both $h_{pq}$ and
$h_{pq\alpha\beta}$ are computed after deleting the exposed vertices,
so they involve none of $x,y',X,Y$.  All remaining factors in the four
coefficients have indices in $J$; consequently the coefficients depend
only on the background.

The deterministic vertex $0$ is included in this same partition.
Since $y_0=1$ and $b_0=0$, an edge to $0$ has weight
$\langle a_i,a_0\rangle=y_i$: when $\alpha=0$ the first edge factor
above is $x$, and when $\beta=0$ the second is $y'$.
If neither partner is $0$, that vertex remains in the residual hafnian.
Thus edges incident to $0$ are accounted for with their correct weights,
without any additional case or term.
\end{proof}

\subsection{Local compression}

This is the coordinate compression step corresponding to the Fourier
interpolation of \cite[Corollary~3.3]{KoehlerLeung2026}; the endpoint map
below is specific to the exposed Gram matching decomposition.
For arbitrary $x,y'$, set $R=(x^2+(y')^2)^{1/2}$ and define endpoint
columns $y^{(p)},y^{(q)}\in\R^{m_r}$ by replacing the pair $(x,y')$ respectively with
$(\operatorname{sgn}(x)R,0)$ and
$(0,\operatorname{sgn}(y')R)$.  All other coordinates are unchanged.  If a coordinate is zero, its
sign-defined endpoint need not preserve the norm; that endpoint has
weight zero.  Norm preservation is used only when both coordinates
are nonzero.  Put
$\theta=(y')^2/(x^2+(y')^2)$, with the convention $\theta=0$ when
$x=y'=0$.

\begin{proposition}[Local compression]
\label[proposition]{prop:local-compression}
For $\Psi_{r,t}(y)=\E_B e^{itP_r(y,B)}$ defined in
\eqref{eq:Psi-definition}, with expectation over the Gaussian columns of
$B$ and $y$ fixed, one has
\begin{equation}
 \abs{\Psi_{r,t}(y)}
 \le
 \Psi_{r,t}(y^{(q)})^\theta
 \Psi_{r,t}(y^{(p)})^{1-\theta}.
 \label{eq:local-compression}
\end{equation}
The two endpoint values in \eqref{eq:local-compression} are real and
nonnegative.  The statement includes all zero coordinate cases; a factor
with exponent zero is interpreted as one.
\end{proposition}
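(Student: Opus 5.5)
The plan is to condition on the background, apply \cref{lem:kernel} pointwise to the exposed pair, and then average with Hölder's inequality. Fix $y$ and the pair $p\ne q$. The background is the collection of lower columns $b_\alpha$, $\alpha\in J$. It is independent of $(X,Y)=(b_p,b_q)$, and the coordinates of $y$ other than $x,y'$ are held fixed. By \cref{lem:local-decomposition}, conditionally on the background,
\[
 \E_{X,Y}e^{itP_r(y,B)}=e^{itaxy'}\,F_{t,T}(y'u,\,xv),
\]
where $T,u,v,a$ are background measurable. The crucial observation is that $T,u,v,a$ do not involve $x$ or $y'$. Since the endpoint columns $y^{(p)},y^{(q)}$ differ from $y$ only in the coordinates $p,q$, the \emph{same} $T,u,v,a$ govern $\Psi_{r,t}$ at $y$, $y^{(p)}$ and $y^{(q)}$.

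The main case is $x\ne0\ne y'$, so that $R>0$ and $\theta\in(0,1)$. Write $y'=\operatorname{sgn}(y')R\sqrt\theta$ and $x=\operatorname{sgn}(x)R\sqrt{1-\theta}$. Put $U=\operatorname{sgn}(y')R\,u$ and $V=\operatorname{sgn}(x)R\,v$. Then $F_{t,T}(y'u,xv)=F_{t,T}(\sqrt\theta U,\sqrt{1-\theta}V)$, and \eqref{eq:geometric-interpolation} gives
\[
 \bigl|\E_{X,Y}e^{itP_r(y,B)}\bigr|=F_{t,T}(U,0)^\theta F_{t,T}(0,V)^{1-\theta}.
\]
Next identify the endpoints. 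At $y^{(q)}$ the pair is $(0,\operatorname{sgn}(y')R)$, so the term $axy'$ vanishes, and \eqref{eq:local-phase} shows that the conditional characteristic function there is exactly $F_{t,T}(U,0)$. This is strictly positive by \eqref{eq:F-left}. Similarly, the conditional characteristic function at $y^{(p)}$ is $F_{t,T}(0,V)>0$ by \eqref{eq:F-right}. Averaging over the background gives $\Psi_{r,t}(y^{(q)})=\E F_{t,T}(U,0)$ and $\Psi_{r,t}(y^{(p)})=\E F_{t,T}(0,V)$. Both are therefore real and nonnegative, indeed positive, as the statement asserts.

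To conclude the main case, bound
\[
 |\Psi_{r,t}(y)|\le\E_{\rm bg}\bigl|\E_{X,Y}e^{itP_r}\bigr|=\E\bigl[F(U,0)^\theta F(0,V)^{1-\theta}\bigr]
\]
and apply Hölder's inequality with exponents $1/\theta$ and $1/(1-\theta)$. This yields $\le(\E F(U,0))^\theta(\E F(0,V))^{1-\theta}$, which is \eqref{eq:local-compression}. Measurability and integrability are harmless, since all quantities lie in $[0,1]$.

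The degenerate cases are handled by the conventions in the statement.

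If $y'=0$, then $\theta=0$. Here $y^{(p)}$ has pair $(\operatorname{sgn}(x)|x|,0)=(x,0)$, so $y^{(p)}=y$. The right side is $\Psi_{r,t}(y)$, which equals $\E F_{t,T}(0,xv)\ge0$ by the same conditioning, with $a x y'=0$. The inequality is then an equality.

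If $x=0$ and $y'\neq0$, then $\theta=1$ and, symmetrically, $y^{(q)}=y$.

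If $x=y'=0$, both endpoints equal $y$, and $\theta=0$.

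The only step I expect to need care is the bookkeeping that the background coefficients are literally identical at $y$ and at both endpoints. The factor $e^{itaxy'}$ is a unimodular phase that is discarded by the absolute value at $y$ and is absent at the endpoints. The sign factors enter through $U$ and $V$, not through $T$. Everything else is a direct application of \cref{lem:kernel}.
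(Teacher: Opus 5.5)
Your proof is correct and follows the paper's argument essentially step for step. You condition on the background, identify the conditional characteristic functions at $y$, $y^{(q)}$, $y^{(p)}$ as $e^{itaxy'}F_{t,T}(\sqrt\theta\,U,\sqrt{1-\theta}\,V)$, $F_{t,T}(U,0)$, $F_{t,T}(0,V)$ with the same background coefficients, apply \eqref{eq:geometric-interpolation}, and finish with the triangle inequality and H\"older. The only cosmetic difference is that you handle $x=0$ or $y'=0$ by noting that the weighted endpoint coincides with $y$, whereas the paper treats all $R>0$ uniformly, since the identities $y'u=\sqrt\theta\operatorname{sgn}(y')Ru$ and $xv=\sqrt{1-\theta}\operatorname{sgn}(x)Rv$ survive a vanishing coordinate. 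In those cases you leave implicit the nonnegativity of the zero-weight endpoint, which follows at once from $F_{t,T}(0,0)=\det(D)^{-1/2}>0$.
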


\begin{proof}
Condition on the background in \cref{lem:local-decomposition}.  If $R=0$,
the original vector and both endpoints coincide, so the assertion is
immediate.  Assume $R>0$.  The definitions of $R$ and $\theta$ give
\begin{equation*}
 y'u=\sqrt\theta\,\operatorname{sgn}(y')Ru,
 \qquad
 xv=\sqrt{1-\theta}\,\operatorname{sgn}(x)Rv.
\end{equation*}
For this fixed background, set
\begin{equation*}
 J_q:=\E_{X,Y}e^{itP_r(y^{(q)},B)},
 \qquad
 J_p:=\E_{X,Y}e^{itP_r(y^{(p)},B)}.
\end{equation*}
At either endpoint one of $x,y'$ is zero, so the scalar term $axy'$ vanishes.
The endpoint formulas in \cref{lem:kernel}, with
$U_q=\operatorname{sgn}(y')Ru$ and
$V_p=\operatorname{sgn}(x)Rv$, therefore give $J_p,J_q>0$.  In the original
conditional expectation the factor $e^{itaxy'}$ is a pure phase, and
\cref{lem:kernel} gives the exact identity
\begin{align*}
 \abs{\E_{X,Y}e^{itP_r(y,B)}}
 &=J_q^\theta J_p^{1-\theta},
\end{align*}
Integrate the remaining background.  The triangle inequality followed by H\"older's
inequality gives
\begin{equation*}
 \abs{\Psi_{r,t}(y)}
 \le \E[J_q^\theta J_p^{1-\theta}]
 \le (\E J_q)^\theta(\E J_p)^{1-\theta}.
\end{equation*}
For $0<\theta<1$ the second inequality is H\"older with conjugate exponents
$1/\theta$ and $1/(1-\theta)$; when $\theta$ is zero or one it is simply the
corresponding endpoint identity.
The expectations are exactly the two endpoint characteristic functions and
remain nonnegative.
\end{proof}

\subsection{Global support reduction}

For two nonzero coordinates, each application of \cref{prop:local-compression} preserves
$\sum_i y_i^2$ and reduces the number of nonzero coordinates by at least
one.  This finite descent is the counterpart of the support compression in
\cite[Proposition~3.5]{KoehlerLeung2026}, with the Gram specific endpoints
supplied by the preceding proposition.

\begin{proposition}[Radial coordinate compression]
\label[proposition]{prop:global-compression}
Assume $r\ge2$.  For every $y\in\R^{m_r}$, $t\in\R$, and
$1\le j\le m_r$, writing $e_j\in\R^{m_r}$ for the $j$th standard basis
column,
\begin{equation}
 \abs{\Psi_{r,t}(y)}
 \le \Psi_{r,t}(\norm y e_j).
 \label{eq:global-compression}
\end{equation}
The right hand side is real and nonnegative.
\end{proposition}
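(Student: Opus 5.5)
We argue by finite descent on the support of $y$, repeatedly applying \cref{prop:local-compression} and using a permutation symmetry to move the final mass onto the prescribed coordinate $j$.

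\textbf{Symmetry.} First note that $\Psi_{r,t}$ is invariant under permutations of the coordinates of $y$. Relabel columns $a_i\mapsto a_{\pi(i)}$, jointly permuting $y$ and the columns of $B$. The hafnian of the augmented Gram matrix is invariant under simultaneous relabeling of indices $1,\ldots,m_r$ (index $0$ fixed), and the law of $B$ is exchangeable in its columns. Hence $\Psi_{r,t}(\pi y)=\Psi_{r,t}(y)$.

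$\Psi_{r,t}$ is also invariant under sign changes of single coordinates. Replacing $a_i$ by $-a_i$ flips $y_i$ and $b_i$. In every matching, vertex $i$ lies in exactly one edge, so $P_r$ changes sign; thus $P_r(y,B)$ becomes $-P_r$. Flipping $b_i$ alone preserves the law of $B$, so
\[
\Psi_{r,t}(y \text{ with } y_i\mapsto -y_i)=\E_B e^{-itP_r(y,B')}
\]
for a $B'$ with the same law as $B$, that is, $\overline{\Psi_{r,t}(y)}$. Vectors of the form $c\,e_j$ make $\Psi$ real and nonnegative, since by \cref{prop:local-compression} with $r\ge2$ there is another coordinate $q$ and the endpoint values are nonnegative. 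Consequently $\Psi_{r,t}(\norm y e_j)=\Psi_{r,t}(-\norm y e_j)$, and the right side of \eqref{eq:global-compression} is real and nonnegative.

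\textbf{Descent.} We prove by induction on the support size $s$ of $y$ that $\abs{\Psi_{r,t}(y)}\le \Psi_{r,t}(\norm y e_j)$ for all $j$.
\begin{itemize}
\item If $s=0$, then $y=0=\norm y e_j$ and there is nothing to prove.
\item If $s=1$, then $y=c\,e_i$ with $\abs c=\norm y$. Permutation and sign invariance give $\Psi_{r,t}(y)=\Psi_{r,t}(\norm y e_j)$ up to conjugation, and the latter value is real and nonnegative.
\item If $s\ge2$, pick two nonzero coordinates $p\ne q$ and apply \cref{prop:local-compression}. The endpoints $y^{(p)},y^{(q)}$ each have support size $s-1$ and the same Euclidean norm as $y$, because both coordinates are nonzero and norm is preserved. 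The endpoint values are nonnegative and equal their absolute values, so the induction hypothesis bounds each by $\Psi_{r,t}(\norm y e_j)$. Then
\[
\abs{\Psi_{r,t}(y)}\le \Psi_{r,t}(\norm ye_j)^{\theta}\,\Psi_{r,t}(\norm ye_j)^{1-\theta}=\Psi_{r,t}(\norm ye_j).
\]
\end{itemize}

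\textbf{Main obstacle.} The delicate point is the symmetry step, in particular checking that index $0$ is untouched by the relabeling. Deleting or flipping a nondeterministic column never changes $b_0=0$ or $y_0=1$; flipping column $i$ negates only the single edge containing $i$, and that edge may be the edge $\{i,0\}$ with weight $y_i$. Handling the exponent conventions ($0^0=1$) when $\theta\in\{0,1\}$ is routine by comparison.
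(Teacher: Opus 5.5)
Your proof is correct and follows the paper's argument: a finite descent on the support size using \cref{prop:local-compression}, with column exchangeability moving the final singleton to the prescribed index $j$. The differences are cosmetic. You bound both endpoints by the induction hypothesis rather than continuing with the larger one. You handle the singleton's sign through the flip $a_i\mapsto -a_i$, which conjugates $\Psi_{r,t}$, combined with endpoint nonnegativity, whereas the paper cites the explicit formula \eqref{eq:singleton-laplace}, which depends only on $s^2$.
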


\begin{proof}
Induct on the support size of $y$.  For support size at least two, apply
\cref{prop:local-compression} to two nonzero coordinates.  Each endpoint has
exactly one fewer nonzero coordinate.  Since both endpoint characteristic
functions are nonnegative, their weighted geometric mean is at most the
larger one; choose that endpoint and repeat.  The Euclidean norm is preserved
at every step, and the descent ends at a singleton.  Simultaneously
permuting the coordinates $y_i$ and the independent lower columns $b_i$
does not change the law, so column exchangeability moves the singleton to
the prescribed index $j$.  Its sign is irrelevant because the
explicit singleton formula \eqref{eq:singleton-laplace} below depends only
on its square.
\end{proof}

\subsection{The collision free endpoint}

At a singleton, the deterministic column has only one possible nonzero
partner.  This gives the deletion that drives the recursion.

\begin{lemma}[Singleton deletion]
\label[lemma]{lem:singleton}
Assume $r\ge2$.  For $s\in\R$ and any $1\le j\le m_r$, form
$B_{-j}\in\R^{(k-1)\times(2r-2)}$ by deleting the $j$th column $b_j$
from $B=[b_1\ \cdots\ b_{m_r}]$, keeping the other columns in order, and set
$H^{(j)}_{k-1,r-1}(B):=\haf(B_{-j}^{\T}B_{-j})$.  Thus $B_{-j}$ is obtained from $A$ by deleting its first row and its $j$th
column; $B_{-j}^{\T}B_{-j}$ is obtained from $B^{\T}B$ by deleting both
row $j$ and column $j$.  Then
\begin{equation}
 P_r(se_j,B)=sH^{(j)}_{k-1,r-1}(B),
 \label{eq:singleton-deletion}
\end{equation}
and $H^{(j)}_{k-1,r-1}(B)$ has the same distribution as
$H_{k-1,r-1}$.  Consequently,
\begin{equation}
 \Psi_{r,t}(se_j)
 =\E\exp\!\left\{-\frac{t^2s^2}{2}V_{k-1,r-1}\right\}.
 \label{eq:singleton-laplace}
\end{equation}
\end{lemma}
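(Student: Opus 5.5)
When $y=se_j$, the augmented Gram matrix has vertex set $\{0,1,\ldots,m_r\}$ with $a_0=e_1$, so $y_0=1$ and $b_0=0$. The plan is to expand $P_r(se_j,B)$ along this vertex $0$. For every $i\ge1$ the edge weight is $\langle a_i,e_1\rangle=y_i$, which vanishes unless $i=j$, where it equals $s$. Hence in the matching expansion only matchings containing the edge $\{0,j\}$ survive. Removing that edge leaves an arbitrary perfect matching of $\{1,\ldots,m_r\}\setminus\{j\}$, which has $2r-2$ vertices. For $\alpha,\beta\neq j$ the edge weights are $\langle a_\alpha,a_\beta\rangle=y_\alpha y_\beta+b_\alpha\cdot b_\beta=b_\alpha\cdot b_\beta$, because $y_\alpha=y_\beta=0$. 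The residual sum is therefore exactly $\haf(B_{-j}^{\T}B_{-j})$, and this proves \eqref{eq:singleton-deletion}. No diagonal entries enter, so the convention in \eqref{eq:H-definition} is respected.

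Next I identify the law. The columns $b_i$, $i\ne j$, are $2r-2$ independent standard Gaussian vectors in $\R^{k-1}$. Relabelled in order, they form a $(k-1)\times(2r-2)$ matrix with i.i.d.\ $N(0,1)$ entries, so $H^{(j)}_{k-1,r-1}(B)$ has the law of $H_{k-1,r-1}$ directly from the definition \eqref{eq:H-definition}. For \eqref{eq:singleton-laplace} I would not pass through this law alone. Instead I choose the last of these columns, call it $x$, and let $A'\in\R^{(k-1)\times(2r-3)}$ collect the others in order, so that $m_{r-1}=2r-3$. The last column expansion \eqref{eq:last-column-expansion}, applied in dimension $k-1$ with $r-1\ge1$ (this uses $r\ge2$), gives $H^{(j)}_{k-1,r-1}(B)=\langle q_{k-1,r-1}(A'),x\rangle$. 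Conditionally on $A'$, this is $N(0,V_{k-1,r-1}(A'))$.

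Conditioning on $A'$ and using the Gaussian characteristic function, I get
\[
\Psi_{r,t}(se_j)=\E\, e^{its H^{(j)}_{k-1,r-1}(B)}=\E\exp\Bigl\{-\tfrac{t^2s^2}{2}V_{k-1,r-1}(A')\Bigr\}.
\]
Since $A'$ has i.i.d.\ standard Gaussian columns, $V_{k-1,r-1}(A')$ has the law of $V_{k-1,r-1}$. This gives \eqref{eq:singleton-laplace}, and it also shows that the value is real and nonnegative and depends on $s$ only through $s^2$, as was used in \cref{prop:global-compression}.

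The only delicate point is the bookkeeping at the deterministic vertex $0$: I must check that no matching avoiding the edge $\{0,j\}$ contributes, and that the weights among the remaining vertices lose their $y$-parts. Both facts follow from $y_i=0$ for $i\neq j$, so I expect no real obstacle. I would state explicitly that the edge weights with $0$ are $y_i$, using $b_0=0$, as in the proof of \cref{lem:local-decomposition}.
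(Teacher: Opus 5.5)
Your proposal is correct and follows the paper's proof essentially step for step. The only nonzero edge at the deterministic vertex is $\{0,j\}$; this forced edge contributes the factor $s$ and leaves $\haf(B_{-j}^{\T}B_{-j})$. Deleting one of the i.i.d.\ columns then identifies the law, and last-column expansion with conditioning yields the Gaussian characteristic function with variance $V_{k-1,r-1}$. Your explicit naming of $A'$ and $x$ simply writes out the conditioning step that the paper states in one line.
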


\begin{proof}
When $y=se_j$, the first row of $A$ is $se_j^{\T}$, and the deterministic
column $e_1\in\R^k$ is
orthogonal to every random column except column $j$.  Hence every nonzero
matching term pairs $e_1$ with that column and contributes the factor $s$.
After deleting this pair, all remaining first coordinates vanish and their
Gram products are those of the lower $(k-1)$-dimensional columns.  This
proves the literal deleted-column identity
\eqref{eq:singleton-deletion}.  Because deleting a fixed member of an
independent identically distributed family leaves $2r-2$ independent standard
Gaussian columns, $H^{(j)}_{k-1,r-1}(B)$ has the law of $H_{k-1,r-1}$.
Expand that lower hafnian along its last column and condition on its
predecessors; the result is centered Gaussian with variance
$V_{k-1,r-1}$, proving
\eqref{eq:singleton-laplace}.
\end{proof}

\section{Fourier comparison and the inverse moment recursion}
\label{sec:fourier}

We use the standard zero dimensional convention $\R^0=\{0\}$.  Its Gram
matrix is the zero matrix, and $V_{0,\ell}=0$ for $\ell\ge1$.  With this
convention, the lower energy in \cref{prop:fourier-laplace} is defined also
when $k=1$.

Average over $y\sim N(0,I_{m_r})$, independently of the lower rows.  By the
row suspension identity,
\begin{equation*}
 \E_{y,B}e^{itP_r(y,B)}
 =\E e^{it\langle e_1,q_{k,r}\rangle}.
\end{equation*}
The triangle inequality followed by \eqref{eq:global-compression} gives
\begin{equation*}
 \abs{\E e^{it\langle e_1,q_{k,r}\rangle}}
 \le \E_y\abs{\Psi_{r,t}(y)}
 \le \E_y\Psi_{r,t}(\norm y e_j).
\end{equation*}
Since $G_{m_r}:=\norm y^2\sim\chi_{m_r}^2$, the singleton formula evaluates
the final expectation and yields
\begin{equation}
 \abs{\E e^{it\langle e_1,q_{k,r}\rangle}}
 \le
 \E\exp\!\left\{-\frac{t^2}{2}G_{m_r}V_{k-1,r-1}\right\}.
 \label{eq:scalar-fourier}
\end{equation}
Here and below the auxiliary chi-square variable is independent of the
lower level cofactor energy.

By \cref{lem:radiality}, the characteristic function of $q_{k,r}$ depends
only on the norm of its Fourier argument.  Invariance under the orthogonal
map $q\mapsto-q$ also makes this characteristic function real.  Its value at
$\xi$ therefore equals its value at $\norm\xi e_1$, and
\eqref{eq:scalar-fourier} with $t=\norm\xi$ gives
\begin{equation}
 \operatorname{Re}\E e^{i\xi\cdot q_{k,r}}
 \le
 \E\exp\!\left\{-\frac{\norm\xi^2}{2}
 G_{m_r}V_{k-1,r-1}\right\},
 \qquad \xi\in\R^k.
 \label{eq:vector-fourier}
\end{equation}

\begin{proposition}[Fourier to Laplace comparison]
\label[proposition]{prop:fourier-laplace}
Assume $r\ge2$ and $k\ge1$.  Let $G_k\sim\chi_k^2$ be independent of all
preceding variables.  For every $s\ge0$,
\begin{equation}
 \E e^{-sV_{k,r}/2}
 \le
 \E e^{-sG_kG_{m_r}V_{k-1,r-1}/2}.
 \label{eq:laplace-comparison}
\end{equation}
\end{proposition}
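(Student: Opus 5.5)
The plan is to turn the Laplace transform of $V_{k,r}=\norm{q_{k,r}}^2$ into an average of the characteristic function of $q_{k,r}$ over a Gaussian Fourier argument, and then apply \eqref{eq:vector-fourier} pointwise in that argument.

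\textbf{Step 1: Gaussian representation.} Let $g\sim N(0,I_k)$ be independent of all other variables. For each fixed $q\in\R^k$ and $s\ge0$, the Gaussian characteristic function gives
\[
 \E_g e^{i\sqrt s\,g\cdot q}=e^{-s\norm q^2/2}.
\]
Substituting $q=q_{k,r}(A)$ and taking expectations yields
\[
 \E e^{-sV_{k,r}/2}=\E_{A,g}e^{i\sqrt s\,g\cdot q_{k,r}(A)}.
\]
The integrand has modulus one, so Fubini applies and we may integrate over $A$ first:
\[
 \E e^{-sV_{k,r}/2}=\E_g\,\varphi(\sqrt s\,g),
 \qquad
 \varphi(\xi):=\E e^{i\xi\cdot q_{k,r}}.
\]

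\textbf{Step 2: pointwise bound.} By the remarks preceding the proposition, radiality (\cref{lem:radiality}) together with the symmetry $q\mapsto -q$ makes $\varphi$ real. Hence $\varphi(\xi)=\operatorname{Re}\varphi(\xi)$, and \eqref{eq:vector-fourier} bounds it for every $\xi\in\R^k$. Take $\xi=\sqrt s\,g$ with $g$ fixed. Because $G_{m_r}$ and $V_{k-1,r-1}$ are independent of $g$, this gives
\[
 \varphi(\sqrt s\,g)\le \E\exp\Bigl\{-\frac{s\norm g^2}{2}G_{m_r}V_{k-1,r-1}\Bigr\},
\]
where the expectation is over $(G_{m_r},V_{k-1,r-1})$ only.

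\textbf{Step 3: integrate over $g$.} Integrating the pointwise bound over $g$ and using Tonelli on the nonnegative right side, we set $G_k:=\norm g^2\sim\chi^2_k$. This variable is independent of $(G_{m_r},V_{k-1,r-1})$, since $g$ was drawn independently. The result is exactly \eqref{eq:laplace-comparison}.

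\textbf{Edge cases and the main point to check.} The case $k=1$ needs no separate argument: the convention $V_{0,r-1}=0$ makes the right side equal to $1$, which trivially dominates. The only delicate point is bookkeeping. We must verify that the chi-square and lower-energy variables in \eqref{eq:vector-fourier} are independent of the auxiliary $g$, so that the joint law on the right matches the statement. We must also verify that \eqref{eq:vector-fourier} holds for every deterministic $\xi$, including $\xi=0$, where both sides equal one. Both hold by construction, so no step is expected to present a real obstacle.
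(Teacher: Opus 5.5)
Your proof is correct and follows the paper's argument. Both proofs average \eqref{eq:vector-fourier} over the Gaussian Fourier argument $\xi=\sqrt s\,Z$, identify the averaged left side with $\E e^{-sV_{k,r}/2}$, and recognize $\norm{Z}^2$ on the right as an independent $\chi^2_k$ variable. The only cosmetic difference is that the paper passes through the explicit value $(1+sU)^{-k/2}$ with $U=G_{m_r}V_{k-1,r-1}$ and treats $s=0$ separately, whereas you set $G_k=\norm g^2$ directly, which also covers $s=0$ without a separate case.
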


\begin{proof}
The case $s=0$ is immediate.  For $s>0$, let $Z\in\R^k$ be an independent $N(0,I_k)$ column,
put $\xi=\sqrt{s}\,Z$, and average \eqref{eq:vector-fourier} in $\xi$.
The average of the left-hand side of \eqref{eq:vector-fourier} equals
$\E e^{-s\norm{q_{k,r}}^2/2}=\E e^{-sV_{k,r}/2}$, which is the
left-hand side of \eqref{eq:laplace-comparison}.  To average the right-hand
side of \eqref{eq:vector-fourier} over $\xi$, we condition on
$U=G_{m_r}V_{k-1,r-1}$.  The resulting conditional average is
\begin{equation*}
 \E_\xi e^{-U\norm\xi^2/2}=(1+sU)^{-k/2}
 =\E_{G_k}e^{-sUG_k/2}.
\end{equation*}
This proves \eqref{eq:laplace-comparison}.
\end{proof}

We now use the completely monotone representation
\begin{equation}
 \frac1{\sqrt{2\pi}}
 \int_0^\infty s^{-1/2}e^{-sx/2}\,\dd s
 =\frac1{\sqrt{x}},
 \qquad x>0.
 \label{eq:inverse-half-representation}
\end{equation}
Indeed, the substitution $u=sx/2$ reduces the integral to
$\Gamma(1/2)=\sqrt\pi$.  This positive Mellin Laplace passage follows the
same Fourier principle as \cite[Lemma~3.4]{KoehlerLeung2026}, but the
random variables on the right arise from singleton row deletion.
Tonelli's theorem applies in the extended nonnegative reals, so the next
argument does not presuppose finiteness of the inverse moment.

\begin{proposition}[Row suspension recursion]
\label[proposition]{prop:row-recursion}
For $r\ge2$ and $k\ge r+1$,
\begin{equation}
 \E V_{k,r}^{-1/2}
 \le
 \gamma_k\gamma_{2r-1}\E V_{k-1,r-1}^{-1/2}.
 \label{eq:row-recursion}
\end{equation}
Consequently, for $n\ge1$ and $k\ge n+1$,
\begin{equation}
 \E V_{k,n}^{-1/2}
 \le
 \left(\prod_{j=0}^{n-1}\gamma_{k-j}\right)
 \left(\prod_{r=2}^{n}\gamma_{2r-1}\right).
 \label{eq:inverse-product}
\end{equation}
\end{proposition}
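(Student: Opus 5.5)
We would integrate the Laplace comparison of \cref{prop:fourier-laplace} against the completely monotone kernel \eqref{eq:inverse-half-representation}. Let $x=V_{k,r}$ in \eqref{eq:inverse-half-representation} and take expectations. By \cref{lem:positivity}, $V_{k,r}>0$ almost surely. Tonelli in the extended nonnegative reals then gives
\[
 \E V_{k,r}^{-1/2}=\frac1{\sqrt{2\pi}}\int_0^\infty s^{-1/2}\E e^{-sV_{k,r}/2}\,\dd s .
\]
Bound the integrand pointwise in $s$ by \eqref{eq:laplace-comparison}. Then reverse the same Tonelli step with $x=G_kG_{m_r}V_{k-1,r-1}$:
\[
 \E V_{k,r}^{-1/2}\le \E\bigl[(G_kG_{m_r}V_{k-1,r-1})^{-1/2}\bigr].
\]
This reverse step needs the product to be almost surely positive, so that the identity \eqref{eq:inverse-half-representation} applies. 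On a null set the value $+\infty$ is harmless, since everything lives in $[0,\infty]$.

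For almost sure positivity of the lower energy we need $k-1\ge1$. That is, we need the lower hafnian to be a genuine Gram hafnian with $r-1\ge1$ and row dimension at least one, so that \cref{lem:positivity} applies to $V_{k-1,r-1}$. The hypotheses $r\ge2$ and $k\ge r+1\ge3$ guarantee this. The chi-square factors are almost surely positive because $k,m_r\ge1$.

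The right-hand side factorizes by independence of $G_k$, $G_{m_r}$ and $V_{k-1,r-1}$, which is stated right after \eqref{eq:scalar-fourier} and in \cref{prop:fourier-laplace}. It equals $\E G_k^{-1/2}\,\E G_{m_r}^{-1/2}\,\E V_{k-1,r-1}^{-1/2}$. By \eqref{eq:gamma-definition} the first two factors are $\gamma_k$ and $\gamma_{2r-1}$. Both are finite because $k>1$ and $2r-1\ge3>1$. This proves \eqref{eq:row-recursion}.

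For \eqref{eq:inverse-product} we induct on $n$. In the base case $n=1$, $m_1=1$, the cofactor is $\haf$ of an empty matrix, which equals $1$. So $q_{k,1}=a_1$ and $V_{k,1}=\norm{a_1}^2\sim\chi_k^2$, hence $\E V_{k,1}^{-1/2}=\gamma_k$, valid since $k\ge2$. The empty $\prod_{r=2}^1$ equals one, so the claimed bound holds. For the inductive step, take $n\ge2$ and $k\ge n+1$. The pair $(k-1,n-1)$ satisfies $k-1\ge (n-1)+1$, so the inductive hypothesis applies to it. That hypothesis gives the product $\prod_{j=0}^{n-2}\gamma_{k-1-j}\prod_{r=2}^{n-1}\gamma_{2r-1}$. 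Multiplying by $\gamma_k\gamma_{2n-1}$ from \eqref{eq:row-recursion} reindexes exactly into the stated product. Every index $k-j$ with $j\le n-1$ is at least $2$, so each $\gamma$ is finite.

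The main subtlety is not computational. It is justifying the two Tonelli interchanges without presupposing finiteness of $\E V_{k,r}^{-1/2}$, and making sure the pointwise Laplace comparison is integrated against a nonnegative weight. Both are handled by working entirely with nonnegative integrands in $[0,\infty]$. Finiteness then emerges from the inductive product.
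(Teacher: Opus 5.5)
Your proposal is correct and follows essentially the same route as the paper's proof. Both pass through the inverse half-moment representation via Tonelli, integrate the Laplace comparison against the nonnegative weight $s^{-1/2}/\sqrt{2\pi}$, factor by independence into $\gamma_k\gamma_{2r-1}\E V_{k-1,r-1}^{-1/2}$, and iterate down to the chi-square base case $V_{d,1}\sim\chi_d^2$. Your explicit positivity remarks and the induction bookkeeping only spell out steps the paper leaves brief.
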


\begin{proof}
Apply \eqref{eq:inverse-half-representation} to $V_{k,r}$ and use
Tonelli's theorem to interchange expectation and integration.  Since
$m_r=2r-1$, integrating \eqref{eq:laplace-comparison} with the nonnegative
weight $s^{-1/2}/\sqrt{2\pi}$ gives the explicit chain
\begin{align*}
 \E V_{k,r}^{-1/2}
 &=\frac1{\sqrt{2\pi}}\E\!\left[
   \int_0^\infty s^{-1/2}e^{-sV_{k,r}/2}\,\dd s\right]\\
 &=\frac1{\sqrt{2\pi}}\int_0^\infty
   s^{-1/2}\E e^{-sV_{k,r}/2}\,\dd s\\
 &\le\frac1{\sqrt{2\pi}}\int_0^\infty
   s^{-1/2}\E e^{-sG_kG_{2r-1}V_{k-1,r-1}/2}\,\dd s\\
 &=\E(G_kG_{2r-1}V_{k-1,r-1})^{-1/2}.
\end{align*}
The last equality follows by Tonelli and
\eqref{eq:inverse-half-representation} applied to the product.
All integrands are nonnegative, so these steps hold in the extended
nonnegative reals without assuming finiteness.  Independence of the three
factors and \eqref{eq:gamma-definition} then give
\begin{align*}
 \E(G_kG_{2r-1}V_{k-1,r-1})^{-1/2}
 &=(\E G_k^{-1/2})(\E G_{2r-1}^{-1/2})
   \E V_{k-1,r-1}^{-1/2}\\
 &=\gamma_k\gamma_{2r-1}\E V_{k-1,r-1}^{-1/2}.
\end{align*}
The extended-integral comparison already holds for $k\ge2$.
The stated range $k\ge r+1$ makes all iterated factors finite.
At level one,
$V_{d,1}\sim\chi_d^2$, so $\E V_{d,1}^{-1/2}=\gamma_d$.  Iterating from
$(k,n)$ through
$(k-1,n-1),\ldots,(k-n+1,1)$ proves
\eqref{eq:inverse-product}.
\end{proof}

\section{Density and normalization}
\label{sec:density-proof}
\subsection{The finite Gram density}
Conditioning on the first $2n-1$ columns in
\eqref{eq:last-column-expansion} gives
\begin{equation}
 H_{k,n}\mid A\sim N(0,V_{k,n}).
 \label{eq:conditional-normal}
\end{equation}
By \cref{lem:positivity,prop:row-recursion}, its positive scale has finite
inverse moment for $k\ge n+1$.  Thus \cref{lem:mixture} gives
\begin{equation}
 f_{k,n}(z)=\E\left[\frac1{\sqrt{2\pi V_{k,n}}}
                          e^{-z^2/(2V_{k,n})}\right].
 \label{eq:density-mixture}
\end{equation}
Its continuity, maximum at zero, and \eqref{eq:inverse-product} prove
\eqref{eq:density-bound}.  The interval conclusion of the same lemma,
with $c=\sigma_{k,n}$, proves \eqref{eq:exact-small-ball} with the coefficient
\eqref{eq:exact-B}.

\subsection{Exact second moment}
\label{sec:moment}

The complex counterpart of \eqref{eq:exact-rms} is given in
\cite[Section~III.1, Theorem~1]{EhrenbergEtAlPRA2025}.
Our direct real proof uses the auxiliary-field method of
\cite[Eq.~(3), Supplemental Eqs.~(S8), (S9), (S11)]{ZhaoComplexMoments2026},
which also treats complex matrices.  Its $M_1$ is the second absolute
moment of the hafnian.  The two-field representation and columnwise
contraction carry over to real columns, with no conjugation; both reduce
the normalization to the same rank one Gaussian integral evaluated below.

Let $g,h$ be independent standard Gaussian columns in $\R^k$, independent
of the columns $x_1,\ldots,x_{2n}$ of $X$, and put
\[
 \Phi_{n,k}(g,h;X)
 :=\prod_{i=1}^{2n}\langle g,x_i\rangle\langle h,x_i\rangle .
\]
For fixed $X$, Wick's matching rule \cite{Isserlis1918} applied separately
to $g$ and $h$ gives
\begin{equation}
 \E_{g,h}\Phi_{n,k}(g,h;X)=H_{k,n}(X)^2.
 \label{eq:aux-fields-square}
\end{equation}
Indeed, each expectation is a sum over perfect matchings of the $2n$
linear forms.  The $g$ field produces one copy of the hafnian of
$X^{\T}X$, and the independent $h$ field produces the other.
Related hafnian identities for products of real linear functionals are
developed in \cite{Frenkel2008}; see also
\cite[Eqs.~(2.2), (2.6)]{ShouEhrenbergEtAl2026} for the auxiliary-field
representation and its general-moment extension.

The integrand is a finite polynomial in jointly Gaussian coordinates and is
absolutely integrable.  Fubini therefore allows the field and column
integrals to be interchanged.  For fixed $g,h$, independence of the columns
and the elementary covariance identity
$\E_x[\langle g,x\rangle\langle h,x\rangle]=\langle g,h\rangle$ yield
\begin{equation}
 \E_X\Phi_{n,k}(g,h;X)=\langle g,h\rangle^{2n}.
 \label{eq:aux-columns}
\end{equation}
Combining the last two displays gives
\[
 \E H_{k,n}^2=\E_{g,h}\langle g,h\rangle^{2n}.
\]

It remains to evaluate this rank one Gaussian bilinear moment.  Conditional
on $g$, the inner product $\langle g,h\rangle$ is centered normal with
variance $\norm g^2$.  Hence its conditional even moment is
$(2n-1)!!\,\norm g^{2n}$.  Since $\norm g^2$ is chi-square with $k$
degrees of freedom, its $n$th moment is
$\prod_{q=0}^{n-1}(k+2q)$.  Thus
\begin{equation}
 \E_{g,h}\langle g,h\rangle^{2n}
 =(2n-1)!!\prod_{q=0}^{n-1}(k+2q).
 \label{eq:bilinear-moment}
\end{equation}
This is \eqref{eq:exact-rms}.  Together with \cref{sec:density-proof}, it
proves every clause of \cref{thm:main} except the elementary coefficient
estimate, which is proved in \cref{sec:constant}.
\section{The symmetric Gaussian limit}
\label{app:symmetric-limit}

\subsection{Fixed dimensional Gaussian limit and shifted bound}
\label{app:symmetric-limit-weak}

\begin{proof}[Proof of the limit and shifted bound in
\cref{thm:symmetric-limit}]
Fix $n$.  Index the coordinates of $\R^{\binom{2n}{2}}$ by the unordered
pairs $\{i,j\}$ with $1\le i<j\le2n$, ordered lexicographically.
For each row index $a$ of $X$, define the column
\[
 W_a:=\bigl(X_{ai}X_{aj}\bigr)_{1\le i<j\le2n}
 \in\R^{\binom{2n}{2}}.
\]
The vectors $W_a$ are independent and identically distributed, centered,
and have finite second moments.  Their covariance matrix is the identity.
Indeed, for two unordered pairs $\{i,j\}$ and $\{\ell,m\}$,
independence and centering of the Gaussian coordinates give
\[
 \E[X_{ai}X_{aj}X_{a\ell}X_{am}]
 =\begin{cases}
   1,&\{i,j\}=\{\ell,m\},\\
   0,&\{i,j\}\ne\{\ell,m\}.
  \end{cases}
\]
In the second case, at least one coordinate occurs to the first power.
The standard multivariate Lindeberg--Feller theorem for random vectors in a
fixed Euclidean space \cite[Proposition~2.27]{vanDerVaart1998} therefore gives
\[
 \left(k^{-1/2}(X^{\T}X)_{ij}\right)_{1\le i<j\le2n}
 \ \Rightarrow\
 \left(S_{n,ij}\right)_{1\le i<j\le2n}.
\]
Let $\Phi_n$ denote the matching polynomial on this strict upper triangular
edge vector.  The hafnian depends only on these coordinates; in particular,
the diagonal of $k^{-1/2}X^{\T}X$ is neither asserted to converge nor used.
The polynomial $\Phi_n$ is continuous and homogeneous of degree $n$, so the
continuous mapping theorem yields
\[
 k^{-n/2}H_{k,n}
 =\Phi_n\!\left(
   \left(k^{-1/2}(X^{\T}X)_{ij}\right)_{i<j}\right)
 \ \Rightarrow\ H_n.
\]

We next identify the normalization on both sides.  Expanding
$\E H_n^2$ as a double sum over perfect matchings, a pair of
distinct matchings has an edge belonging to exactly one of them.  The
corresponding centered independent Gaussian entry occurs once, so that cross
term has expectation zero.  A matching paired with itself contributes one.
There are $(2n-1)!!$ perfect matchings, and hence
\[
 (\sigma_n^{\mathrm{sym}})^2=(2n-1)!!.
\]
On the finite row side, \eqref{eq:exact-rms} gives
\[
 k^{-n}\sigma_{k,n}^2
 =(2n-1)!!\prod_{q=0}^{n-1}\left(1+\frac{2q}{k}\right)
 \longrightarrow(2n-1)!!.
\]
Taking nonnegative square roots proves
$k^{-n/2}\sigma_{k,n}\to\sigma_n^{\mathrm{sym}}$.

The gamma ratio asymptotic, equivalently the limiting case of Wendel's
inequality \cite{Wendel1948}, gives
\[
 \sqrt d\,\gamma_d\longrightarrow1
 \qquad(d\to\infty).
\]
Thus, for every fixed $0\le j<n$,
$\sqrt{k}\,\gamma_{k-j}\to1$.  Substituting this and the preceding root mean
square limit into \eqref{eq:exact-B} gives
\[
 B^{\R}_{k,n}
 =\sqrt{\frac2\pi}\,\bigl(k^{-n/2}\sigma_{k,n}\bigr)
   \prod_{j=0}^{n-1}\bigl(\sqrt{k}\,\gamma_{k-j}\bigr)
   \prod_{r=2}^{n}\gamma_{2r-1}
 \longrightarrow b_n^{\R}.
\]

It remains to transfer the interval bound.  Put
$Z_k:=k^{-n/2}H_{k,n}$, $Z:=H_n$, and
$s_k:=k^{-n/2}\sigma_{k,n}$.  We proved that $Z_k\Rightarrow Z$ and
$s_k\to\sigma_n^{\mathrm{sym}}>0$.  Rescaling
\eqref{eq:exact-small-ball}, with its arbitrary finite row shift chosen as
$k^{n/2}z$, gives, for every $r>0$ and all sufficiently large $k$,
\[
 \Pp\{\abs{Z_k-z}\le r\}
 \le \min\!\left\{1,\frac{B_{k,n}^{\R}}{s_k}\,r\right\}.
\]
The raw interval coefficient therefore converges as
$B_{k,n}^{\R}/s_k\to b_n^{\R}/\sigma_n^{\mathrm{sym}}$.

Fix $z\in\R$, $\varepsilon\ge0$, and $\delta>0$.  The interval centered at
$z$ with radius $(\varepsilon+\delta)\sigma_n^{\mathrm{sym}}$ is open.  The
Portmanteau theorem, the inclusion of the smaller closed interval, and the
preceding finite row bound give
\begin{align*}
 \Pp\{\abs{Z-z}\le\varepsilon\sigma_n^{\mathrm{sym}}\}
 &\le \Pp\{\abs{Z-z}<
       (\varepsilon+\delta)\sigma_n^{\mathrm{sym}}\}\\
 &\le \liminf_{k\to\infty}
       \Pp\{\abs{Z_k-z}<
       (\varepsilon+\delta)\sigma_n^{\mathrm{sym}}\}\\
 &\le \liminf_{k\to\infty}
       \min\!\left\{1,
       \frac{B_{k,n}^{\R}}{s_k}
       (\varepsilon+\delta)\sigma_n^{\mathrm{sym}}\right\}\\
 &=\min\{1,b_n^{\R}(\varepsilon+\delta)\}.
\end{align*}
The finite theorem applies for all sufficiently large $k$, which is enough
for the limit.  Letting $\delta\downarrow0$ proves the first shifted bound.
The stronger elementary envelope follows from \eqref{eq:bn-three-eighth}.
\end{proof}

\subsection{Density from the interval bound}
\label{app:symmetric-limit-density}
\begin{proof}[Proof of the density clauses of \cref{thm:symmetric-limit}]
Expose all edges not incident to vertex $2n$.  Define the columns
$C=(C_i)_{i=1}^{2n-1}$ and $g=(g_i)_{i=1}^{2n-1}$ in $\R^{2n-1}$ by
$C_i=\haf((S_n)_{\setminus\{i,2n\}})$ and $g_i=(S_n)_{i,2n}$.
Matching expansion gives $H_n=\langle g,C\rangle$.
The incident Gaussian edges are independent of $C$, so, with $V=\|C\|^2$,
$H_n\mid C\sim N(0,V)$.
The cofactor $C_{2n-1}$ equals one when its remaining vertices have just
the adjacent matching edges, each equal to one.  It is a nonzero polynomial
(the constant one for $n=1$), so the same zero-set argument as in
\cref{lem:positivity} gives $V>0$ almost surely.

The preceding interval argument, which did not use a density conclusion,
has already proved
\[
 \frac{\Pp\{|H_n|\le h\}}{2h}
 \le \frac{b_n^{\R}}{2\sigma_n^{\mathrm{sym}}},\qquad h>0.
\]
Apply \eqref{eq:mixture-fatou} with $R=\sqrt V$.  Substituting the
definition of $b_n^{\R}$ yields
\[
 \frac{\E V^{-1/2}}{\sqrt{2\pi}}
 \le\frac{b_n^{\R}}{2\sigma_n^{\mathrm{sym}}},\qquad
 \E V^{-1/2}\le\prod_{r=2}^n\gamma_{2r-1}<\infty.
\]
The direct density conclusions now follow from \cref{lem:mixture}, with
$c=\sigma_n^{\mathrm{sym}}$.  In particular,
$f_{\widehat H_n}(0)\le b_n^{\R}/2$.  This uses neither convergence of
densities nor an unproved independent-edge compression step.
\end{proof}

\section{Coefficient estimates and polynomial small balls}
\label{sec:constant}

The exact coefficient separates its order and row-dimension contributions.
For $k>n$, adjacent gamma ratios cancel to give
\begin{equation}
 \prod_{j=0}^{n-1}\gamma_{k-j}
 =2^{-n/2}\frac{\Gamma((k-n)/2)}{\Gamma(k/2)},\qquad
 B^{\R}_{k,n}=b_n^{\R}R_{k,n},\quad
 R_{k,n}:=\frac{\Gamma((k-n)/2)\sqrt{\Gamma(k/2+n)}}{\Gamma(k/2)^{3/2}}.
 \label{eq:gamma-cancellation}
\end{equation}
Indeed, the RMS row product is
$\prod_{q=0}^{n-1}(k+2q)=2^n\Gamma(k/2+n)/\Gamma(k/2)$ by the gamma
recurrence.  Multiplying its square root by the first expression proves
the factorization.

\subsection{A finite three-eighths bound}
We prove, for every $n\ge1$,
\begin{equation}
 \sqrt{2/\pi}\,n^{3/8}\le b_n^{\R}
 \le\frac2{\sqrt\pi}n^{3/8}.
 \label{eq:bn-three-eighth}
\end{equation}
Here is an explicit gamma-ratio argument.  For integers $j\ge1$, set
\[
 W_j=\sqrt{j-1/4}\,\frac{\Gamma(j)}{\Gamma(j+1/2)},\qquad
 Q_j=\frac{\Gamma(j+1)}{\sqrt{j+1/4}\,\Gamma(j+1/2)}.
\]
Both tend to one by Wendel's gamma-ratio limit \cite{Wendel1948}.
The recurrence gives
\[
 \frac{W_{j+1}^2}{W_j^2}
 =\frac{j^2(j+3/4)}{(j-1/4)(j+1/2)^2}>1,\qquad
 \frac{Q_{j+1}^2}{Q_j^2}
 =\frac{(j+1)^2(j+1/4)}{(j+5/4)(j+1/2)^2}<1.
\]
In the first fraction numerator minus denominator is $1/16$; in the
second it is $-1/16$.  Consequently $W_j\le1$ and $Q_j\ge1$.
The first inequality, substituted at $j=r-1$, gives
$\gamma_{2r-1}^2\le[2(r-5/4)]^{-1}$ for $r\ge2$.  Multiplication and
the gamma recurrence yield
\[
 (b_n^{\R})^2
 \le\frac4{\pi\sqrt\pi}
       \frac{\Gamma(n+1/2)\Gamma(3/4)}{\Gamma(n-1/4)}.
\]
Log convexity of $\Gamma$ gives
$\Gamma(x+3/4)\le x^{3/4}\Gamma(x)$ for $x>0$ by interpolation between
$x$ and $x+1$, and
$\Gamma(3/4)\le\sqrt{\Gamma(1/2)\Gamma(1)}=\pi^{1/4}\le\sqrt\pi$.
Taking $x=n-1/4$ proves the upper bound in \eqref{eq:bn-three-eighth}.
For the lower bound, $Q_j\ge1$ gives
\[
 \left(\frac{b_{j+1}^{\R}}{b_j^{\R}}\right)^2
 =(j+1/2)\frac{\Gamma(j)^2}{\Gamma(j+1/2)^2}
 \ge(1+1/(2j))(1+1/(4j))
 \ge(1+1/j)^{3/4}.
\]
The final inequality follows from concavity,
$(1+t)^{3/4}\le1+3t/4$ for $t\ge0$.  Telescope from
$b_1^{\R}=\sqrt{2/\pi}$ to finish the proof.

\subsection{The row-dimension envelope}
Cauchy--Schwarz and the chi-square density give, for $d>2$,
\begin{equation}
 \gamma_d^2\le\E(\chi_d^2)^{-1}=\frac1{d-2}.
 \label{eq:gamma-CS}
\end{equation}
For later comparison this also implies, by cancellation of odd integers,
\begin{equation}
 (2n-1)!!\prod_{r=2}^n\gamma_{2r-1}^2\le2n-1.
 \label{eq:odd-telescope}
\end{equation}
For $k\ge n+2$ and $0\le q<n$, \eqref{eq:gamma-CS} and $1+u\le e^u$
give
\begin{equation}
 (k+2q)\gamma_{k-q}^2
 \le 1+\frac{3q+2}{k-q-2}
 \le\exp\!\left\{\frac{3q+2}{k-n-1}\right\}.
 \label{eq:physical-bound}
\end{equation}
The product of the square roots is $R_{k,n}$, and
$\sum_{q=0}^{n-1}(3q+2)=(3n^2+n)/2$.  Hence
\[
 R_{k,n}\le\exp\!\left\{\frac{3n^2+n}{4(k-n-1)}\right\}.
\]
Combining this with \eqref{eq:bn-three-eighth} proves
\eqref{eq:elementary-B}.

\begin{proof}[Proof of \cref{cor:polynomial}]
The assumption gives $k_n/n\ge n/(D\log n)\to\infty$.  Thus the finite
theorem applies eventually and $k_n-n-1=k_n(1+o(1))$.  Taking logarithms
of \eqref{eq:elementary-B} gives
\[
 \log B^{\R}_{k_n,n}
 \le\log(2/\sqrt\pi)+\frac38\log n+
       \frac{3n^2+n}{4(k_n-n-1)}
 \le\left(\frac38+\frac{3D}{4}+o(1)\right)\log n.
\]
Multiplication by $n^{-\alpha}$ and the strict inequality in
\eqref{eq:alpha-choice} prove \eqref{eq:polynomial-small-ball}.
\end{proof}

\subsection{Exact comparison scales}
\label{sec:exact-comparison-scales}
The second coefficient also has an
exact equivalent.  Write $\mathcal G$ for the Barnes $G$ function,
satisfying $\mathcal G(z+1)=\Gamma(z)\mathcal G(z)$ and
$\mathcal G(1)=1$.  The definition of $\gamma_d$, the double factorial
identity, and the standard Barnes asymptotic give
\begin{equation}\label{eq:bnR-barnes-asymptotic}
 \frac{b_n^{\R}}2
 =\pi^{-3/4}\Gamma(n+1/2)^{1/2}
   \frac{\mathcal G(n)\mathcal G(3/2)}{\mathcal G(n+1/2)}
 \sim \frac{\mathcal G(1/2)}{\pi^{1/4}}n^{3/8}.
\end{equation}
Here $\mathcal G(3/2)=\sqrt\pi\,\mathcal G(1/2)$; the final equivalent follows
from the large argument expansion in \cite[Eq.~(5.17.5)]{NISTDLMFBarnesG}.

The two comparison scales satisfy, for every $n\ge1$,
\begin{equation}\label{eq:bnR-kappa-comparison}
 \frac{b_n^{\R}}2
 \le \sqrt{\frac{2n-1}{2\pi}}
 \le \kappa_{n,1}.
\end{equation}
The first inequality is the fixed degree form of the Gamma ratio estimate
in \eqref{eq:odd-telescope}.  For the second, put
$\rho_n:=\Gamma((n+1)/2)/\Gamma(n/2)$.  The duplication formula gives
$\kappa_{n,1}^2=n\rho_n/(2\sqrt\pi)$, while
$\rho_{n+2}=(n+1)\rho_n/n$.  The cases $n=1,2$ are direct, and the recurrence
together with $\rho_3=2/\sqrt\pi$ shows that
$\kappa_{n,1}^2\ge(2n-1)/(2\pi)$ for all $n\ge1$.

\section{A prescribed-pattern optical consequence}
\label{sec:real-optical}
Let $N=2n$, let $O_M$ be Haar orthogonal, and choose a deterministic or
independent output set $S\subseteq[M]$ of size $N$.  Suppose
$n+1\le K<M$, equal squeezing $r>0$ is applied to the first $K$ inputs,
and $NK/M\to0$.  With $\tau=\tanh r$ and
$X_M=\sqrt M\,O_{S,[K]}^{\T}\in\R^{K\times N}$, the optical kernel satisfies the exact
Gram identity
\[
 B_S=\frac{\tau}{M}X_M^{\T}X_M.
\]
Jahangiri, Arrazola, Quesada, and Killoran
\cite[Sec.~II~A]{JahangiriEtAl2020} explicitly treat real symmetric kernels
in Gaussian boson sampling.  For the real orthogonal interferometer used
here, their pure squeezed-state formula
\cite[Sec.~III~A~1, Eq.~(26)]{JahangiriEtAl2020} gives collision free
probabilities proportional to $\haf(B_S)^2$.
Writing $\delta_{M,N,K}$ for the total-variation distance from $X_M$ to
the iid standard real Gaussian $K\times N$ matrix, sparse-block
approximation gives $\delta_{M,N,K}\to0$
\cite[Theorem~1(i)]{JiangMa2019}\cite[Theorem~4]{Stewart2020}.
Homogeneity, total-variation contraction, and \cref{thm:main} imply
\[
 \sup_z\Pp\left\{|\haf(B_S)-z|\le
       \varepsilon(\tau/M)^n\sigma_{K,n}\right\}
 \le\min\{1,B^{\R}_{K,n}\varepsilon\}+\delta_{M,N,K}.
\]
Appendix~\ref{app:optical} gives the optical model, the full derivation,
and a conditional additive-to-relative consequence.  The approximation is
asymptotic; no inverse-polynomial rate for its error is assumed.

\section{Discussion and formal verification}
\label{sec:formalization}
The range $k\ge n+1$ suffices for a bounded density.  Boundedness in the
remaining range, strict comparison with the Pfaffian peak for higher
orders, and the sharp finite-dimension transition remain open.
The coefficient's $n^{3/8}$ order does not establish the same growth for
the actual hafnian density.  Unequal covariances and non-Gaussian columns
are further questions.

In the optical model, $K=M$ would give
$B=\tau I_M$ and zero nontrivial collision free hafnians, so the sparse
real sector cannot be extended to fully squeezed inputs by the same
Gaussian approximation.

The GitHub repository \cite{ZhaoRealGramHafniansGitHub2026} provides
unconditional Lean~4 verification of Theorems~\ref{thm:main}
and~\ref{thm:symmetric-limit}, using mathlib
\cite{deMouraUllrich2021,Mathlib2020} and no additional mathematical axioms.
The Zenodo release \cite{ZhaoLeanVerification2026} contains the broader
paper development, including supporting lemmas, equation endpoints,
classical benchmarks, optical transfer arguments, and the exact Barnes
identity and asymptotic \eqref{eq:bnR-barnes-asymptotic}.  Its correspondence
identifies every numbered theorem and equation and records whether it is
a definition, a hypothesis, a proved conclusion, or a cited input.
The full Pfaffian product law \eqref{eq:df-pfaffian-law-axiom} is the single
additional mathematical axiom, so its benchmark consequences are
conditionally verified.  The permanent bound and Haar-block approximation
are cited results outside the formal coverage; the optical transfer takes
an approximation bound as an explicit hypothesis.  Inventories also record
unnumbered and inline formulas, without asserting a separate Lean proof
for every expression.

\appendix

\section{The bilinear Gaussian integral}
\label{app:kernel}
\begin{proof}
Condition on $Y$ and integrate in $X$.  This gives
\begin{equation*}
 F_{t,T}(U,V)
 =\E_Y\exp\!\left\{-\frac{t^2}{2}\norm{TY+U}^2
 +itY\cdot V\right\}.
\end{equation*}
After expanding the norm and putting $E=I_d+t^2T^{\T}T$, the same expression is
\begin{equation*}
 e^{-t^2\norm U^2/2}
 \E_Y\exp\!\left\{-\frac12Y^{\T}(t^2T^{\T}T)Y
 +(-t^2T^{\T}U+itV)^{\T}Y\right\}.
\end{equation*}
For a real positive definite matrix $E\in\R^{d\times d}$ and a column
$w\in\mathbb C^d$, completing the square gives the Gaussian integral
\begin{equation*}
 (2\pi)^{-d/2}\int_{\R^d}
 e^{-y^{\T}Ey/2+w^{\T}y}\,\dd y
 =\det(E)^{-1/2}e^{w^{\T}E^{-1}w/2}.
\end{equation*}
For completeness, diagonalize $E$.  In each coordinate the identity follows
first for real $w$ by completing the square.  Both sides are entire functions
of that complex coordinate, so analytic continuation extends the identity to
complex $w$; multiplying the coordinate formulas gives the displayed
integral.
Apply it with $w=-t^2T^{\T}U+itV$.  Expanding
$w^{\T}E^{-1}w/2$ shows that the mixed term
$-it^3U^{\T}TE^{-1}V$ is purely imaginary.  Using
\begin{equation*}
 D^{-1}=I_d-t^2TE^{-1}T^{\T},
 \qquad \det D=\det E,
\end{equation*}
one obtains
\begin{align*}
 \abs{F_{t,T}(U,V)}
 &=\det(D)^{-1/2}
 \exp\!\left\{-\frac{t^2}{2}
 \big(U^{\T}D^{-1}U+V^{\T}E^{-1}V\big)\right\}.
\end{align*}
The endpoint formulas follow, and scaling the two quadratic forms by
$\theta$ and $1-\theta$ proves
\eqref{eq:geometric-interpolation}.
\end{proof}

\section{Classical Gaussian benchmarks}
\label{app:benchmarks}
\subsection{Cofactor conventions}
\label{app:four-cofactor}
For the permanent and determinant we use their usual unsigned and signed
permutation expansions.  The hafnian sums over perfect matchings.  The
Pfaffian uses the alternating matching signs in increasing vertex order;
equivalently it is the signed permutation sum divided by $2^n n!$
\cite[Section~0, Eqs.~(0.1)--(0.5)]{KnuthOverlappingPfaffians1996}.
All four polynomials of the empty matrix equal one.
Let $G_{i\mid n}$ delete row $i$ and column $n$, and let
$B_{\setminus\{i,2n\}}$ delete the indicated principal pair.  Define
\begin{equation}\label{eq:four-cofactor-vectors}
 \begin{aligned}
 C_i^P&:=\operatorname{per}(G_{i\mid n}),
       &&1\le i\le n,\\
 C_i^D&:=(-1)^{i+n}\det(G_{i\mid n}),
       &&1\le i\le n,\\
 C_i^H&:=\haf\bigl((S_n)_{\setminus\{i,2n\}}\bigr),
       &&1\le i\le2n-1,\\
 C_i^{\mathrm{Pf}}&:=(-1)^{i+1}
       \pf\bigl((A_n)_{\setminus\{i,2n\}}\bigr),
       &&1\le i\le2n-1.
 \end{aligned}
\end{equation}
Let $C^P,C^D\in\R^n$ and $C^H,C^{\mathrm{Pf}}\in\R^{2n-1}$ be the
corresponding column vectors.  Expose the Gaussian columns
$g^P,g^D\in\R^n$ and $g^H,g^{\mathrm{Pf}}\in\R^{2n-1}$ by
\begin{equation}\label{eq:four-exposed-gaussians}
 \begin{aligned}
 g^P=g^D&:=(G_{in})_{i=1}^{n},\\
 g^H&:=\bigl((S_n)_{i,2n}\bigr)_{i=1}^{2n-1},
 \qquad
 g^{\mathrm{Pf}}:=\bigl((A_n)_{i,2n}\bigr)_{i=1}^{2n-1}.
 \end{aligned}
\end{equation}
Thus the Ginibre cases expose the last column, and the symmetric and skew
symmetric cases expose the edges incident to the last vertex.  ``Unsigned''
for $C^P$ and $C^H$ means the absence of an alternating cofactor sign, not
that their random coordinates are nonnegative.

\begin{lemma}[The four cofactor expansions]
\label[lemma]{lem:four-cofactor}
For each $X=P_n,D_n,H_n,\mathrm{Pf}_n$, its displayed $g$ is standard
Gaussian and independent of $C$.  The scale $R_X=\|C\|$ is positive
almost surely, and
\begin{equation}
 X=\langle g,C\rangle,\qquad X\mid C\sim N(0,R_X^2).
 \label{eq:four-cofactor-conditional-law}
\end{equation}
Whenever $m_X=\E R_X^{-1}<\infty$, the conclusions of
\cref{lem:mixture} apply.  Explicitly, for $c>0$,
\begin{equation}
 f_{X/c}(x)=c f_X(cx)
 =\frac c{\sqrt{2\pi}}\E\left[R_X^{-1}e^{-c^2x^2/(2R_X^2)}\right],
 \label{eq:four-cofactor-density}
\end{equation}
\begin{equation}
 \|f_{X/c}\|_\infty=f_{X/c}(0)=\frac c{\sqrt{2\pi}}m_X,
 \label{eq:four-cofactor-peak}
\end{equation}
and
\begin{equation}
 \sup_z\Pp\{|X-z|\le c\varepsilon\}
 \le\min\{1,\sqrt{2/\pi}\,cm_X\varepsilon\}.
 \label{eq:four-cofactor-small-ball}
\end{equation}
\end{lemma}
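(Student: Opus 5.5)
The plan is to treat all four cases with one template. First establish the linear identity $X=\langle g,C\rangle$ by expanding along the exposed last column or last vertex. Then check that $g$ is independent of $C$ and standard Gaussian. Next prove $\|C\|>0$ almost surely by the zero-set argument of \cref{lem:positivity}. Finally, conditioning yields $X\mid C\sim N(0,\|C\|^2)$, and everything else follows from \cref{lem:mixture} with $R=R_X$.

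\textbf{Expansion identities.} For $P_n$ and $D_n$, expand along column $n$. Every permutation term contains exactly one factor $G_{in}$, and the remaining factors form $\operatorname{per}(G_{i\mid n})$, or $(-1)^{i+n}\det(G_{i\mid n})$ by Laplace expansion. This gives $X=\sum_i G_{in}C_i$.

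For $H_n$, partition the perfect matchings by the partner $i$ of vertex $2n$. The residual matchings are those of $\{1,\ldots,2n-1\}\setminus\{i\}$, whose total weight is $C_i^H$. This is the same argument already used in \cref{app:symmetric-limit-density}.

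For $\mathrm{Pf}_n$, use the standard Pfaffian expansion along the last row and column. With increasing vertex order, the sign of the edge $\{i,2n\}$ relative to the residual Pfaffian is $(-1)^{i+1}$, matching the convention in \eqref{eq:four-cofactor-vectors}. This is the one place where care is needed: I would verify it by writing the matching sign as the crossing parity. Moving the pair $(i,2n)$ to the end requires passing $2n-1-i$ vertices, and these contribute a parity of $(-1)^{i+1}$.

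\textbf{Independence.} In every case the coordinates of $C$ are polynomials in entries disjoint from those of $g$. For the Ginibre cases these are the entries outside column $n$; for the symmetric and skew cases they are the edges not incident to $2n$. All entries are independent standard Gaussians, so $g$ is a standard Gaussian column independent of $C$, and $\langle g,C\rangle\mid C\sim N(0,\|C\|^2)$.

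\textbf{Positivity.} Exhibit one coordinate that is a nonzero polynomial; the zero-set result \cite{Mityagin2020ZeroSet} then gives $\|C\|>0$ almost surely.
\begin{itemize}
\item[] For $P_n$ and $D_n$, take $C_n$ at the identity matrix, where it equals $\pm1$ (and equals one for $n=1$).
\item[] For $H_n$ and $\mathrm{Pf}_n$, take $C_{2n-1}$ evaluated with the adjacent edges $\{1,2\},\{3,4\},\ldots$ set to one and all other entries zero. Only one matching survives, so $C_{2n-1}=\pm1$; the skew version has the same structure.
\end{itemize}

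\textbf{Conclusion.} Apply \cref{lem:mixture} with $R=R_X$. It gives the density formula, and the change of variables gives \eqref{eq:four-cofactor-density}. Continuity, evenness, and the maximum at zero give \eqref{eq:four-cofactor-peak}, with the supremum equal to the value $cm_X/\sqrt{2\pi}$. The interval bound \eqref{eq:four-cofactor-small-ball} is read off directly from the lemma.

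The only real obstacle is the Pfaffian sign bookkeeping. Even that is harmless for the distributional claims: flipping the signs of the coordinates of $C$ does not change $\|C\|$, and $g$ is symmetric.
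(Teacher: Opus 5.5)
Your proposal is correct and matches the paper's proof step for step: expand by the exposed column or the partner of vertex $2n$, get independence from disjoint coordinates, get a.s.\ positivity because the last cofactor equals one at an identity minor or at the adjacent matching (via the zero-set result), then condition and apply \cref{lem:mixture}. The only difference is cosmetic: you move the Pfaffian pair $(i,2n)$ to the end, giving $2n-1-i$ inversions, whereas the paper brings it to the front, giving $(i-1)+(2n-2)$ transpositions; both yield the sign $(-1)^{i+1}$.
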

\begin{proof}
Group permutations by the row assigned to column $n$, or matchings by
the partner of vertex $2n$.  The determinant cofactor sign is
$(-1)^{i+n}$; bringing the Pfaffian pair $(i,2n)$ to the front takes
$(i-1)+(2n-2)$ transpositions and gives $(-1)^{i+1}$.
This proves the four expansions.  Each cofactor uses only coordinates
outside the exposed column or incident edges, proving independence.
In the Ginibre cases the final cofactor equals one at an identity minor.
In the matching cases it equals one at the adjacent-edge matching.
It is therefore a nonzero polynomial, hence nonzero almost surely by
\cite{Mityagin2020ZeroSet}.  Conditioning and \cref{lem:mixture} finish
the proof.
\end{proof}

\subsection{The signed determinant law}
\label{ex:real-ginibre-determinant}
Gaussian Gram--Schmidt gives independent residual lengths with laws
$\chi_n,\ldots,\chi_1$: at each step the projected column is standard
Gaussian on the orthogonal complement, and its conditional radial law
does not depend on earlier columns.  Their product is the determinant
magnitude.  Negating one row preserves this magnitude and the matrix
law and reverses the sign.  The sign is thus fair and independent of
the nonzero magnitude.  A fair sign times $\chi_1$ is standard normal, so
\begin{equation}
 D_n\stackrel{\mathrm d}=Z\prod_{j=2}^n\chi_j,
 \qquad Z\sim N(0,1),\quad\text{all factors independent}.
 \label{eq:determinant-product-law}
\end{equation}
This is the Bartlett/QR product, also derived in
\cite[Proposition~2.1, Eq.~(10) and its proof]{KoehlerLeung2026}.
Their gamma variables at $\beta=1$ have shape $j/2$ and scale $2$,
equivalently chi squares of degree $j$.
Independence gives $\E D_n^2=n!$.  Applying \cref{lem:mixture} and
cancelling adjacent gamma ratios gives
\[
 f_{\widehat D_n}(0)
 =\frac{\sqrt{n!}}{\sqrt{2\pi}}\prod_{j=2}^n\gamma_j
 =\frac{\sqrt{n!}}{2^{n/2}\Gamma(n/2)}=\kappa_{n,1}.
\]
The same lemma gives continuity, the maximum at zero, and the shifted
interval coefficient $2\kappa_{n,1}$.

\subsection{The signed Pfaffian law}
\label{app:df-axiom}
Dumitriu and Forrester's Gaussian Householder reduction
\cite[Section~2, Eqs.~(2.3)--(2.4), and Eq.~(1.4) at $\beta=2$]{DumitriuForrester2010}
is the source for the following scalar consequence:
\begin{equation}
 \mathrm{Pf}_n\stackrel{\mathrm d}=Z\prod_{r=2}^n\chi_{2r-1},
 \qquad Z\sim N(0,1),\quad\text{all factors independent}.
 \label{eq:df-pfaffian-law-axiom}
\end{equation}
The source gives a matrix reduction; it does not state this signed scalar
law verbatim.  We spell out the scale and sign conversion.
Its $\widetilde\chi_j$ is the square root of a gamma variable of shape
$j/2$ and rate one, so $\sqrt2\widetilde\chi_j$ is our $\chi_j$.

The first exposed row has $2n-1$ independent unit-variance normal entries,
so its length is $\chi_{2n-1}$.  Conditional on that row, orthogonal
congruence preserves the independent trailing skew Gaussian block: its
density depends only on the Frobenius norm.  Its conditional law is
unchanged, proving independence from the extracted radius and allowing
iteration.  Thus the tridiagonal superdiagonal has independent radii
$\chi_{2n-1},\chi_{2n-2},\ldots,\chi_1$.
Only the adjacent matching contributes to a tridiagonal Pfaffian, so its
absolute value is the product of the odd radii.  Orthogonal congruence
preserves that absolute value.  Simultaneously negating the first row and
column reverses the Pfaffian and preserves the Gaussian law.  As in the
determinant argument, this supplies an independent fair sign, which combines
with the $\chi_1$ factor to give \eqref{eq:df-pfaffian-law-axiom}.
The formal supplement identifies this full signed law as the one external
mathematical axiom in the Pfaffian benchmark.

\begin{lemma}[Real Gaussian Pfaffian anticoncentration]
\label[lemma]{lem:pfaffian-benchmark}
The unit-variance Pfaffian satisfies
\begin{equation}
 \E\mathrm{Pf}_n^2=(2n-1)!!.
 \label{eq:pfaffian-second-moment}
\end{equation}
Its RMS-normalized density is continuous and even, maximal at zero,
with value $b_n^{\R}/2$.  For $\varepsilon\ge0$,
\begin{equation}
 \sup_z\Pp\{|\mathrm{Pf}_n-z|\le\varepsilon\sqrt{(2n-1)!!}\}
 \le\min\{1,b_n^{\R}\varepsilon\}.
 \label{eq:pfaffian-shifted-small-ball}
\end{equation}
\end{lemma}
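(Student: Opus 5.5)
The proof takes the signed product law \eqref{eq:df-pfaffian-law-axiom} as its starting point and reads everything off \cref{lem:mixture}. Write $\mathrm{Pf}_n\stackrel{\mathrm d}=Z\,R$ with $R=\prod_{r=2}^n\chi_{2r-1}$ independent of $Z\sim N(0,1)$. Conditionally on $R$, $\mathrm{Pf}_n\sim N(0,R^2)$, and $R>0$ almost surely; for $n=1$ the product is empty and $R=1$.

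\emph{Second moment.} By independence,
\[
\E\mathrm{Pf}_n^2=\E Z^2\prod_{r=2}^n\E\chi_{2r-1}^2=\prod_{r=2}^n(2r-1)=(2n-1)!!,
\]
which is \eqref{eq:pfaffian-second-moment}. As a cross-check that does not use the product law, one can also expand the square over pairs of matchings. Distinct matchings contribute zero because some entry appears to the first power, exactly as in the symmetric computation of \cref{app:symmetric-limit-weak}, and each matching paired with itself contributes one, since the signs square to one.

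\emph{Inverse moment.} Independence and \eqref{eq:gamma-definition} give
\[
m:=\E R^{-1}=\prod_{r=2}^n\E(\chi^2_{2r-1})^{-1/2}=\prod_{r=2}^n\gamma_{2r-1}.
\]
Each factor is finite because $2r-1\ge3>1$.

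\emph{Density and interval bound.} Apply \cref{lem:mixture} with $c=\sqrt{(2n-1)!!}$. The normalized density is continuous, even and nonincreasing in $|x|$. Its maximum at zero is
\[
\frac{cm}{\sqrt{2\pi}}=\frac12\sqrt{\frac2\pi}\sqrt{(2n-1)!!}\prod_{r=2}^n\gamma_{2r-1}=\frac{b_n^{\R}}2,
\]
which gives equality rather than merely an upper bound. The interval clause of \cref{lem:mixture} then gives the coefficient $\sqrt{2/\pi}\,cm=b_n^{\R}$, which is \eqref{eq:pfaffian-shifted-small-ball}.

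The only step with real content is the product law itself, including its independence and the independent fair sign. That law is the cited input from \cref{app:df-axiom} and is assumed here. Given it, the remaining work is bookkeeping of the normalization constants, so I anticipate no real obstacle beyond checking that the chi-square conventions match \eqref{eq:gamma-definition}.
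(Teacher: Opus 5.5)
Your proposal is correct and follows the paper's proof essentially line for line. The product law \eqref{eq:df-pfaffian-law-axiom} gives the second moment by independence and the inverse moment $\prod_{r=2}^n\gamma_{2r-1}$, and \cref{lem:mixture} with $c=\sqrt{(2n-1)!!}$ then yields the peak $b_n^{\R}/2$ and the interval coefficient $b_n^{\R}$. Your matching-expansion cross-check of \eqref{eq:pfaffian-second-moment} is also valid, and it shows that the second-moment clause does not depend on the cited product law.
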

\begin{proof}
In \eqref{eq:df-pfaffian-law-axiom}, the positive scale has inverse
moment $\prod_{r=2}^n\gamma_{2r-1}<\infty$, and its second moment is
$\prod_{r=2}^n(2r-1)$.  Independence gives the second-moment formula;
\cref{lem:mixture} gives all density and interval conclusions.  The
normalization identifies the peak as $b_n^{\R}/2$, which is at most
$n^{3/8}/\sqrt\pi$ by \eqref{eq:bn-three-eighth}.
\end{proof}

\section{The real optical model and probability transfer}
\label{app:optical}

The result above is a probability theorem for a real Gaussian Gram hafnian.
This section derives a separate prescribed pattern application to an equally
squeezed real optical sector.  The optical identity is exact; the replacement
of a sparse Haar orthogonal block by independent Gaussians is asymptotic.

\medskip
\noindent\textbf{Prescribed pattern optical consequence.}\quad
Let $N=2n$.  Along a sequence $M\to\infty$, choose
$K=K(M)<M$ with $K\ge n+1$ and $NK/M\to0$.  Let $O_M$ be Haar
distributed on $O(M)$, and let $S\subseteq[M]$, $\lvert S\rvert=N$, be
deterministic or chosen independently of $O_M$.  These are the assumptions
under which the probability estimate below is asserted.

The real field is not merely a technical simplification.  In a pure zero
displacement Gaussian boson sampler, prepare the first $K$ input modes in the
same squeezed vacuum $S(r)\lvert0\rangle$, with $r>0$ and common squeezing
phase zero, and prepare the remaining $M-K$ modes in vacuum.  Write
$[K]=\{1,\ldots,K\}$, put $\tau:=\tanh r$, and let $P_K:=I_K\oplus0_{M-K}$
be the projector onto the squeezed inputs.  Specializing the squeezed-state
construction of Jahangiri et al.\ \cite[Sec.~III~A~1, Eq.~(21)]{JahangiriEtAl2020}
to the real orthogonal interferometer $O_M$ gives the real symmetric kernel
\[
 B:=\tau O_M P_K O_M^{\T}.
\]
Define $B_S:=B[S,S]$, the principal submatrix of $B$ indexed by the prescribed
collision free output set $S$.  Since $B_S$ is real, the squared modulus in
the photon-counting formula \cite[Eq.~(26)]{JahangiriEtAl2020} is
$\haf(B_S)^2$.  General finite unitary
interferometers, and hence real orthogonal interferometers as a special case,
admit optical decompositions into transformations acting on two modes
\cite{ReckEtAl1994}.

Equal squeezing gives an exact Gram identity.  Set
\[
 X_M:=\sqrt M\,O_{S,[K]}^{\T}\in\R^{K\times N},
 \qquad
 B_S=\frac{\tau}{M}X_M^{\T}X_M,
 \qquad
 \haf(B_S)=\left(\frac{\tau}{M}\right)^n
             \haf(X_M^{\T}X_M).
\]
Here $O_{S,[K]}$ denotes the submatrix of $O_M$ whose rows are indexed by $S$
and whose columns are indexed by $[K]$.

\begin{figure}[!htbp]
  \centering
  \includegraphics[width=0.98\textwidth]{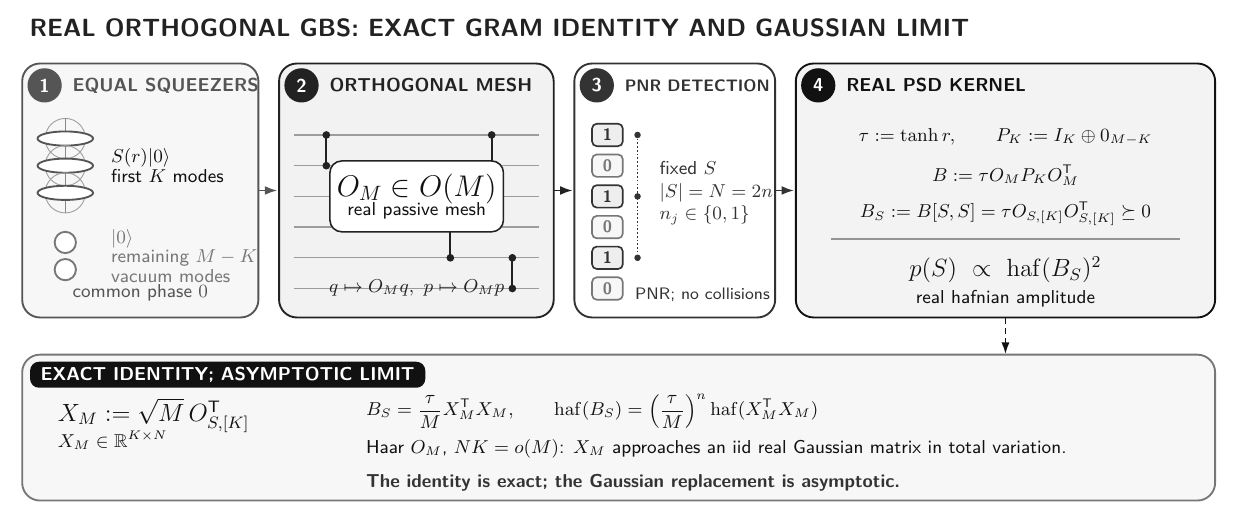}
  \caption{\textbf{Equal squeezing real orthogonal sector.}  The first $K$
  inputs are identically squeezed and the remaining inputs are vacuum; PNR
  denotes photon number resolving detection.  The $M$-component columns
  $q,p$ collect the position and momentum quadratures, respectively.
  The displayed Gram identity is
  exact, whereas the independent Gaussian replacement is asymptotic under
  $NK/M\to0$.}
  \label{fig:real-gbs-sector}
\end{figure}

Thus the common squeezing strength changes only the deterministic scale
$\tau^n$ of the hafnian.  The random matrix $X_M$ is not independent Gaussian
at finite $M$.  By invariance of Haar measure under row permutations,
$O_{S,[K]}$ has the same law as the $N\times K$ upper left block.  Therefore,
if $O_M$ is Haar orthogonal, Jiang and Ma
\cite[Theorem~1(i)]{JiangMa2019} and, independently, Stewart
\cite[Theorem~4]{Stewart2020} give
\[
 \delta_{M,N,K}:=
 d_{\mathrm{TV}}\!\left(\mathcal L(X_M),\mathcal L(X)\right)\longrightarrow0
 \quad\text{along }M\to\infty\text{ with }NK/M\to0,
\]
where $X\in\R^{K\times N}$ has independent standard normal entries
and we use the convention
$d_{\mathrm{TV}}(\mu,\nu)=\sup_A|\mu(A)-\nu(A)|$.  Here their ambient
dimension and block dimensions are respectively $M,N,K$; transposition
preserves total variation.  Only convergence to zero is used, so choosing the
other common normalization of total variation would not change the
conclusion.  The hypothesis $NK/M\to0$ is automatic for fixed
$N,K$ and makes the block dependence asymptotically negligible.  Total
variation cannot increase under the map $Y\mapsto\haf(Y^{\T}Y)$.
Consequently, when
$K\ge n+1$, the main theorem
implies the asymptotic optical bound
\[
 \sup_{z\in\R}\mathbb P_{O_M}\!\left\{
 \left|\haf(B_S)-z\right|
 \le \varepsilon\left(\frac{\tau}{M}\right)^n\sigma_{K,n}\right\}
 \le \min\{1,B^{\R}_{K,n}\varepsilon\}+\delta_{M,N,K}.
\]

For this prescribed output pattern, the same estimate gives a conditional
additive to relative transfer for the collision free probability.  The pure
squeezed-state formula \cite[Eq.~(26)]{JahangiriEtAl2020}, with its
normalization $(\operatorname{sech}r)^K$ for the $K$ squeezed inputs,
and the exact Gram identity give
\[
 p_S:=(\operatorname{sech}r)^K\haf(B_S)^2,
 \qquad
 p_{\mathrm{ref}}:=(\operatorname{sech}r)^K
 \left(\frac{\tau}{M}\right)^{2n}\sigma_{K,n}^2,
\]
where $p_{\mathrm{ref}}$ is the surrogate mean scale, namely the exact mean
in the independent Gaussian model.  Hence, for $t\ge0$,
\[
 \{p_S\le t p_{\mathrm{ref}}\}
 =\left\{\left|\haf(X_M^{\T}X_M)\right|
 \le \sqrt t\,\sigma_{K,n}\right\},
 \qquad
 \mathbb P_{O_M}\{p_S\le t p_{\mathrm{ref}}\}
 \le \min\{1,B^{\R}_{K,n}\sqrt t+\delta_{M,N,K}\}.
\]
More generally, let $\eta\ge0$ and $0\le\gamma\le1$, and suppose that an
estimator $\widetilde p_S$, possibly randomized and dependent on the optical
instance, satisfies
\[
 \mathbb P\{|\widetilde p_S-p_S|>\eta p_{\mathrm{ref}}\}\le\gamma.
\]
For every $\rho>0$, we have the deterministic event inclusion
\[
 \{|\widetilde p_S-p_S|>\rho p_S\}
 \subseteq
 \{|\widetilde p_S-p_S|>\eta p_{\mathrm{ref}}\}
 \cup\{p_S\le(\eta/\rho)p_{\mathrm{ref}}\}.
\]
Combining this inclusion with the preceding estimate gives
\[
 \mathbb P\{|\widetilde p_S-p_S|>\rho p_S\}
 \le \min\!\left\{1,\gamma+B^{\R}_{K,n}\sqrt{\eta/\rho}
       +\delta_{M,N,K}\right\}.
\]
Probability is joint over the Haar orthogonal matrix and the estimator's
internal randomness.  No independence assumption on the estimator is needed.
The square root is
the real probability penalty: the real amplitude has a small ball bound
linear in its radius, whereas the output probability is its square.  This
calculation uses only the zero centered specialization of the stronger shifted
theorem.  It converts an available additive guarantee into a relative one; it
does not itself provide an additive estimator, an average case hardness
theorem, or a noise robust quantum advantage reduction.

The approximation error is additive and only known here to be $o(1)$.
The prescribed-pattern hypothesis excludes adaptive selection based on
the interferometer.  The main text explains the degeneracy at $K=M$;
the present conclusion requires the stated sparse-block regime.
\FloatBarrier

\section*{Code availability}
The unconditional Lean~4 proofs of Theorems~\ref{thm:main}
and~\ref{thm:symmetric-limit} are available in the
\href{https://github.com/HongruZhao/RealGramHafnians}{GitHub repository}
\cite{ZhaoRealGramHafniansGitHub2026}.
The broader development, conditional benchmark proofs, and theorem/equation
correspondence are archived on Zenodo \cite{ZhaoLeanVerification2026}, with
the coverage described in Section~\ref{sec:formalization}.  Both releases
provide pinned sources and build instructions.  The Zenodo identifier is
\href{https://doi.org/10.5281/zenodo.22498111}{10.5281/zenodo.22498111}.
The Zenodo source release is licensed under GPL-3.0-only.

\section*{Competing interests}
The author declares no competing financial or nonfinancial interests.

\section*{AI-assisted preparation}
OpenAI Codex (GPT-5.6 Sol and GPT-6 Astra Ultra) assisted under the author's
direction with literature organization, LaTeX restructuring, language
editing, proof exploration, and the implementation and debugging of
the paper-facing Lean endpoints identified in the companion correspondence.  Codex also assisted in running the
pinned Lean build and transitive axiom audits; the resulting proof terms were
checked by the Lean kernel.

\section*{Funding}
No specific funding was received for this work.

\renewcommand{\bibfont}{\small}
\begingroup
\interlinepenalty=10000
\bibliographystyle{plainnat}
\bibliography{bibliography}
\endgroup

\end{document}